\newtheorem{thm}{Theorem}[section]
\newtheorem{cor}[thm]{Corollary}
\newtheorem{lem}[thm]{Lemma}
\newtheorem{claim}[thm]{Claim}
\theoremstyle{definition}
\theoremstyle{remark}
\newtheorem{rem}[thm]{Remark}
\numberwithin{equation}{section}
\newcommand*{\rom}[1]{\expandafter\@slowromancap\romannumeral #1@}
\begin{document}

\title{Volume bounds of conic 2-spheres}
\author{Hao Fang}
\address{14 MacLean Hall, University of Iowa, Iowa City, IA, 52242}
\email{hao-fang@uiowa.edu}
\author{Mijia Lai}%
\address{800 Dongchuan RD, Shanghai Jiao Tong University, Shanghai, China,200240 }%
\email{laimijia@sjtu.edu.cn}%
\thanks{M.L.'s work is partially supported by Shanghai Sailing program No. 15YF1406200 and NSFC No. 11501360.}

\begin{abstract}
We obtain sharp volume bound for a conic 2-sphere in terms of its Gaussian curvature bound. We also give the geometric models realizing the extremal volume. In particular, when the curvature is bounded in absolute value by $1$, we compute the minimal volume of a conic sphere in the sense of Gromov. In order to apply the level set analysis and iso-perimetric inequality as in our previous works, we develop some new analytical tools to treat  regions with vanishing curvature.
\end{abstract}

% ----------------------------------------------------------------

\maketitle
\section{Introduction}
In this paper, we study the volume of a conic sphere when its Gaussian curvature is bounded.

Let us first introduce notations for conic surfaces. A metric $g$ on a closed surface is said to have conic singularity of order $\beta$ ($\beta>-1$) at $p$, if in a local holomorphic coordinate centered at $p$,
\[
g(z)=e^{2u} |z|^{2\beta} |dz|^2,
\]
where $u$ is continuous and $C^2$ away from $p$. The conic singularity is modeled on the Euclidean cone: $\mathbb{R}^2$ equipped with $|z|^{2\beta} |dz|^2$ is isometric to a flat cone of angle $2\pi(\beta+1)$ at the cone tip. A metric $g$ is said to represent the divisor $D=\sum_{i=1}^{n} \beta_i p_i$, if $g$ has conic singularities of order $\beta_i$ at $p_i$ and is smooth elsewhere.

The Gauss-Bonnet theorem for Riemannian surfaces with conic metrics becomes (c.f. ~\cite{Tr})
\begin{align} \label{GB}
\int_{M} K_g dv_g=2\pi(\chi(M)+|D|):=2\pi \chi(M,D),
\end{align}
where $|D|=\sum_{i=1}^{n} \beta_i$ is the degree of the divisor.

Troyanov~\cite{Tr} has systematically studied the prescribing curvature problem for conic surfaces. For $\chi(M,D)\leq 0$, Troyanov obtained several results parallel with those results of prescribing curvature problem on smooth surfaces. For $\chi(M, D)>0$, he further divided the problem into three cases:
\begin{description}
  \item[Subcritical] $|D|<2\min_{i} \beta_i$;
  \item[Critical] $|D|=2\min_{i} \beta_i$;
  \item[Supercritical] $|D|>2\min_{i} \beta_i$.
\end{description}
Among some positive results in the subcritical case, he identified the analytical difficulties in the critical and supercritical cases. Briefly speaking, the corresponding functionals in the variational approach lose compactness.

In the rest of the paper, we shall assume that $-1<\beta_1\leq \cdots\leq \beta_n\leq 0$. Our main result is a sharp volume bound for a conic sphere in terms of its Gaussian curvature bound. Such volume bound is significant in critical and supercritical cases.

\begin{thm}  [Main theorem] \label{T1} Let $(S^2, D, g)$ be a conic sphere, set $\alpha:=|D|-\min_{i}\beta_i$ and $\beta:=\min_{i}\beta_i$. Suppose $\beta\leq \alpha$ and $a\leq K_g\leq b$.

Then if $a=0$, we have
\begin{equation} \notag
{\rm Vol}(S^2,g)\geq V_{0,b}:=\frac{\pi (2+|D|)^{2}}{b(1+\beta)};
\end{equation}
if $a< 0$, we have
\begin{equation} \notag
{\rm Vol}(S^2,g)\geq V_{a,b}:=2\pi[\frac{\beta+1}{a}+\frac{\alpha+1}{b}-\frac{\sqrt{(b-a)(b(\beta+1)^{2}-a(\alpha+1)^{2})}}{ab}];
\end{equation}
and if $a>0$, we have
\begin{align} \notag
V_{\rm min}:=&2\pi[\frac{\beta+1}{a}+\frac{\alpha+1}{b}-\frac{\sqrt{(b-a)(b(\beta+1)^{2}-a(\alpha+1)^{2})}}{ab}]\leq \\ \notag
{\rm Vol}(S^2,g)&\leq 2\pi[\frac{\beta+1}{a}+\frac{\alpha+1}{b}+\frac{\sqrt{(b-a)(b(\beta+1)^{2}-a(\alpha+1)^{2})}}{ab}]:=V_{\rm max}.
\end{align}
\end{thm}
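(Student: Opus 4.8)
The plan is to reduce the volume bound to a one-dimensional differential inequality for the squared length of geodesic circles centered at the most singular cone point, and then to solve the resulting boundary value problem by an elementary optimization whose extremizers are the advertised geometric models.

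\emph{Step 1 (level-set reduction).} Let $p_1$ be the cone point of order $\beta=\min_i\beta_i$ and let $f=d_g(\cdot,p_1)$. For the geodesic balls $\Omega_t=\{f<t\}$ write $A(t)=\mathrm{Vol}(\Omega_t)$ and $L(t)=\mathrm{length}(\partial\Omega_t)$. Since $|\nabla f|=1$, the coarea formula gives $A'(t)=L(t)$ where the level set is regular, and the first variation of arclength gives $L'(t)=\int_{\partial\Omega_t}k_g\,ds$. Reparametrizing by area and setting $\psi(A):=L^2$, I get $\psi'(A)=2\int_{\partial\Omega_A}k_g\,ds$. As long as $\Omega_A$ is a topological disk containing only $p_1$, Gauss--Bonnet yields $\int_{\partial\Omega_A}k_g\,ds=2\pi(1+\beta)-m(A)$, where $m(A):=\int_{\Omega_A}K_g\,dv_g$. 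Hence $\psi'(A)=2\big[2\pi(1+\beta)-m(A)\big]$ and $\psi''(A)=-2m'(A)$. Because $m'(A)$ is the average of $K_g$ over $\partial\Omega_A$, the curvature bound $a\le K_g\le b$ becomes the clean two-sided estimate
\[
-2b\le \psi''(A)\le -2a.
\]
The cone geometry near $p_1$ gives $\psi(A)=4\pi(1+\beta)A+o(A)$, so $\psi(0)=0$ and $\psi'(0)=4\pi(1+\beta)$; running the same analysis from the complementary disk (which carries the remaining divisor of degree $\alpha$) and using $\int_{S^2}K_g\,dv_g=2\pi(2+|D|)=2\pi(1+\beta)+2\pi(1+\alpha)$ gives the matching data $\psi(V)=0$ and $\psi'(V)=-4\pi(1+\alpha)$ at the total volume $V=\mathrm{Vol}(S^2,g)$, together with $\psi\ge 0$ throughout.

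\emph{Step 2 (optimization).} The problem is now purely one-dimensional: among all $\psi\in C^{1}[0,V]$ with $\psi\ge 0$, $\psi(0)=\psi(V)=0$, $\psi'(0)=4\pi(1+\beta)$, $\psi'(V)=-4\pi(1+\alpha)$ and $-2b\le\psi''\le-2a$, determine the admissible range of the free parameter $V$. Integrating $\psi''$ once and twice and imposing the four endpoint conditions shows that the admissible volumes form a closed interval whose endpoints are attained by \emph{bang-bang} profiles, i.e. $\psi''$ switches exactly once between the values $-2b$ and $-2a$. Each of the two orderings (which curvature value is taken near the $\beta$-cone) determines the switch location and the volume uniquely through continuity of $\psi$ and $\psi'$; a direct computation of these two explicit solutions yields the closed forms $V_{\min}$ and $V_{\max}$ of the theorem. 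Geometrically the extremizers are two constant-curvature spherical caps---one of curvature $a$, one of curvature $b$---glued along a common circle, which are exactly the geometric models realizing the extremal volume. When $a>0$ both endpoints are finite, giving the two-sided bound; when $a\le 0$ the strict concavity is lost and only the lower bound survives, with the $a=0$ formula $V_{0,b}=\pi(2+|D|)^2/(b(1+\beta))$ recovered as the limit $a\to0^-$ of $V_{a,b}$.

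\emph{Main obstacle.} The derivation of $-2b\le\psi''\le-2a$ and of the endpoint data is only valid where the geodesic spheres are smooth embedded circles bounding disks; at the cut locus of $p_1$ the length $L$ can drop and $\Omega_A$ can change topology, and---this is the crux flagged in the abstract---in regions where $K_g$ vanishes geodesics do not focus, so the usual comparison and symmetrization arguments that tame the cut locus degenerate. The hard part is therefore to show that $\psi(A)=L^2$ remains absolutely continuous in $A$ and that the differential inequality persists (in the appropriate one-sided, distributional sense, with the jumps across the cut locus going in the favorable direction) even across flat pieces. I expect to handle this with the new analytic tools alluded to in the introduction: approximate $g$ by metrics whose curvature is bounded strictly away from the degenerate values, establish the inequalities there, and pass to the limit while controlling the flat regions separately, so that the endpoint identities $\psi'(0)=4\pi(1+\beta)$ and $\psi'(V)=-4\pi(1+\alpha)$ and the constraint $\psi\ge 0$ survive.
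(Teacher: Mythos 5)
Your route is genuinely different from the paper's: you work with geodesic circles centered at the most singular cone point (a Fiala--Hartman type argument), whereas the paper works with the level sets of the conformal factor $u$, applies the Euclidean isoperimetric inequality and the co-area formula, and reparametrizes by area to restore absolute continuity. Formally your reduction does land on the same place: your constraints on $m(A)=\int_{\Omega_A}K\,dv$ (namely $m(0)=0$, $m(V)=\chi$, $a\le m'\le b$, plus one integrated inequality) feed into exactly the bang-bang optimization of the paper's Lemma~\ref{lemma} and reproduce the quadratic inequality (\ref{E13}), hence all three cases of the theorem. So the skeleton is viable. But as written, two of your steps are false, not merely technical.

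First, the ``clean two-sided estimate'' $-2b\le\psi''\le-2a$ cannot hold: at a cut time of $p_1$ part of the circle $\{d=t\}$ disappears, so $L$ jumps \emph{down} while $A$ varies continuously; hence $\psi(A)=L^2$ is not even continuous in $A$, let alone $C^1$, and $\psi''$ acquires negative singular measure. Only the one-sided statement survives, namely that $d\psi-2\bigl[2\pi(1+\beta)-m\bigr]\,dA$ is a non-positive measure. Second, the endpoint condition $\psi'(V)=-4\pi(1+\alpha)$ is false in general: the remaining cone points $p_2,\dots,p_n$ sit wherever they sit, not at the farthest point from $p_1$. Once the circles sweep past a $p_i$, Gauss--Bonnet picks up a term $2\pi\beta_i$ that your formula (valid only ``while $\Omega_A$ contains only $p_1$'') drops, and the circles collapse at the farthest point or cut-locus set of $p_1$, which is generically a smooth point (giving $\psi'(V^-)=-4\pi$) or even a segment (giving $\psi(V^-)>0$). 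So the four-endpoint boundary value problem of your Step 2 is not the correct reduction. The repair is that you need much less: from $\psi\ge 0$, $\psi(0)=0$, and the one-sided inequality $d\psi\le 2\bigl[2\pi(1+\beta)+2\pi\sum_{i\ge 2,\,p_i\in\Omega_A}\beta_i-m\bigr]\,dA\le 2\bigl[2\pi(1+\beta)-m\bigr]\,dA$ (both the cut-locus jumps and the extra cone points have the favorable sign), one gets the single constraint $\int_0^V m\,dA\le 2\pi(1+\beta)V$, and minimizing $\int_0^V m$ over $m(0)=0$, $m(V)=\chi$, $a\le m'\le b$ yields (\ref{E13}); since for $a>0$ the quadratic traps $V$ between its two roots, this one-sided input already gives both the lower and the upper bound. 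Finally, the regularity theory you invoke (BV structure of $t\mapsto L(t)$ with only downward jumps, a.e.\ first variation, on a surface with conic points) is substantial and is only waved at; note also that your diagnosis of the difficulty is misplaced---flat regions are harmless for geodesic circles (no sign of $K$ is needed there), and are precisely the obstruction for the paper's level-set method, whose plateaus of $u$ destroy absolute continuity in $t$; your method's real obstacles are the cut locus and the cone points.
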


Meanwhile, we have the following geometric models realizing  extremal volume bounds in Theorem~\ref{T1}. Identify $(S^2,g)$ with $(\mathbb{C}^{*},g)$ via stereographic projection, then we have
\begin{thm}\label{T2} Let $(S^2, D, g)$ be a conic sphere, set $\alpha:=|D|-\min_{i}\beta_i$ and $\beta:=\beta_1$. Suppose $\beta\leq \alpha$ and $a\leq K_g\leq b$, then
\begin{enumerate}
  \item ${\rm Vol}(S^2,g)$ achieves $V_{a,b}$ if and only if $(S^2, D, g)$ is isometric to  $(\mathbb{C}^{*}, D=\alpha 0+\beta \infty, g_{\rm extr}=e^{2u_{a,b}}g_0)$;
  \item ${\rm Vol}(S^2,g)$ achieves $V_{0,b}$ if and only if $(S^2, D, g)$ is isometric to  $(\mathbb{C}^{*}, D=\alpha 0+\beta \infty, g_{\rm extr}=e^{2u_{0,b}}g_0)$;
  \item ${\rm Vol}(S^2, g)$ achieves $V_{\rm min}$ if and only if $(S^2, D, g)$ is isometric to $(\mathbb{C}^{*}, D=\alpha 0+\beta \infty, g_{\rm extr}=e^{2u_{\rm min}}g_0)$;
  \item ${\rm Vol}(S^2, g)$ achieves $V_{\rm max}$ if and only if $(S^2, D, g)$ is isometric to $(\mathbb{C}^{*}, D=\alpha 0+\beta \infty, g_{\rm extr}=e^{2u_{\rm max}}g_0)$.
\end{enumerate}
\end{thm}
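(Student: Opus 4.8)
The plan is to obtain Theorem~\ref{T2} by retracing the level-set proof of the bounds in Theorem~\ref{T1} and isolating the equality cases. Fix a stereographic chart in which no cone point lies at $\infty$ and write $g=e^{2w}\abs{dz}^{2}$, so that $w$ has the singularities $\beta_i\log\abs{z-p_i}$ at the cone points, $w\sim-2\log\abs{z}$ near $\infty$, and $-\Delta w=K_g e^{2w}$ in the flat Laplacian away from the $p_i$. For regular values $t$ set $\Omega_t=\set{w>t}$, $A(t)=\mathrm{Vol}(\Omega_t)$ and let $L(t)$ be the $g$-length of $\set{w=t}$; since $\beta_i\le0$ the potential blows up to $+\infty$ at every cone point, so for large $t$ the set $\Omega_t$ is a union of small disks about the $p_i$ that merge as $t$ decreases and exhaust the surface as $t\to-\infty$. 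Writing $m(t)=\int_{\Omega_t}K_g\,dv_g$ and $\delta(t)=\sum_{p_i\in\Omega_t}\beta_i$, the conic Gauss--Bonnet identity reads
\[
\int_{\set{w=t}}\kappa_g\,dL_g=2\pi\big(1+\delta(t)\big)-m(t).
\]
Combining this with the coarea formula $-A'(t)=\int_{\set{w=t}}\abs{\nabla w}^{-1}dL_g$, the Cauchy--Schwarz inequality $L(t)^2\le\big(-A'(t)\big)\int_{\set{w=t}}\abs{\nabla w}\,dL_g$, and the pinching $a\le K_g\le b$ produces a differential inequality for $A(t)$ whose extremal solution is the radial model profile; integrating it gives the bounds of Theorem~\ref{T1}, with the conformal factors $u_{a,b},u_{0,b},u_{\min},u_{\max}$ arising as the corresponding ODE solutions.

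Now suppose $\mathrm{Vol}(S^2,g)$ equals the extremal value. Then the differential inequality is saturated for almost every $t$, which forces every inequality in its derivation to be an equality. The curvature bound enters only through $m(t)\le bA(t)$ on the inner range of levels and $m(t)\ge aA(t)$ on the outer range (with the two reversed for the upper bound $V_{\max}$ in the case $a>0$). Saturating these for a.e.\ $t$ and differentiating in $t$ forces $K_g\equiv b$ on the inner region swept out by the large levels and $K_g\equiv a$ on the complementary outer region, the switch occurring across a single level set; this reproduces the two-piece structure of the model.

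Equality in the Cauchy--Schwarz step forces $\abs{\nabla w}$ to be constant along each level set, so the level sets are equidistant simple closed curves and $w$ is a function of the geodesic distance $\rho$ to the core of the exhaustion; equivalently $g=d\rho^2+\psi(\rho)^2d\theta^2$ is rotationally symmetric. A metric of revolution can carry conic singularities only at the two fixed points of the $S^1$-action, hence all the $p_i$ must collapse onto these two poles; matching the boundary data of the extremal ODE identified in Theorem~\ref{T1} forces the sharpest order $\beta=\beta_1$ at one pole and the remaining mass $\alpha=\abs{D}-\beta_1$ at the other. After a M\"obius change of chart placing the poles at $0$ and $\infty$ we obtain $D=\alpha\,0+\beta\,\infty$, and a rotationally symmetric metric with curvature $b$ near one pole, $a$ near the other, and these cone orders is determined up to isometry by its profile, namely $g_{\mathrm{extr}}=e^{2u_{a,b}}g_0$. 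The other three cases are identical.

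The main obstacle is the rigidity of the isoperimetric/level-set step in the conic setting, and especially across regions of vanishing curvature. One must prevent bifurcation of the level sets at critical points of $w$ and control the levels at which cone points are crossed, so that $\Omega_t$ stays a disk-type domain and $\delta(t)$ is piecewise constant; and where $K_g\equiv0$ the comparison ODE degenerates and must be treated by its flat limit rather than by dividing by $K_g$. These are precisely the situations handled by the analytical tools developed for Theorem~\ref{T1}, which I would invoke to propagate the almost-everywhere equalities to the pointwise rigidity above. The converse is then a direct verification: for each explicit radial factor $u_{a,b},u_{0,b},u_{\min},u_{\max}$ one computes $\mathrm{Vol}=2\pi\int e^{2u}r\,dr$ and checks that it equals $V_{a,b},V_{0,b},V_{\min},V_{\max}$ respectively, establishing both implications.
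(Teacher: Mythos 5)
Your overall strategy---retrace the level-set proof of Theorem~\ref{T1} and extract the equality cases---is the same as the paper's, and your curvature-saturation step ($K_g\equiv b$ on the inner region, $K_g\equiv a$ on the outer region, with a single switching level) matches the paper's use of the equality case of Lemma~\ref{lemma}. But your rigidity step contains a genuine gap: you derive rotational symmetry from the Cauchy--Schwarz equality alone, claiming that $\abs{\nabla w}$ constant on each level set makes the levels ``equidistant simple closed curves'' and hence $g=d\rho^2+\psi(\rho)^2d\theta^2$. That implication is false. Constant $\abs{\nabla w}$ on level sets only makes the level family parallel (transnormal); parallel closed curves need not be round. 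Concretely, in a flat piece ($K\equiv 0$, exactly the situation relevant to $V_{0,b}$) let $w$ be a decreasing function of the Euclidean distance to a non-circular convex curve: then $\abs{\nabla w}$ is constant on every level set and the levels are equidistant, yet nothing is rotationally symmetric; the honest normal form is $g=d\rho^2+\psi(\rho,\theta)^2d\theta^2$, and killing the $\theta$-dependence requires more input than Cauchy--Schwarz equality.

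The missing input is the isoperimetric inequality and its equality case, which your derivation never actually invokes. In the paper the chain is $4\pi B(t)\leq(\int_{\partial\Omega(t)}1)^{2}\leq\int_{\partial\Omega(t)}\abs{\nabla u}\cdot\int_{\partial\Omega(t)}\abs{\nabla u}^{-1}$, all computed in the Euclidean stereographic chart: equality in the first (isoperimetric) inequality forces the level sets to be round circles for a.e.\ $t$, and only then does equality in the second (H\"older/Cauchy--Schwarz) inequality force these circles to be concentric, which is what yields rotational symmetry and the collapse of $D$ to two cone points at $0$ and $\infty$. Note also that without some isoperimetric input your listed ingredients (Gauss--Bonnet along levels, coarea, Cauchy--Schwarz, pinching) cannot even close to a differential inequality for the $g$-area of $\Omega_t$: nothing in that list bounds the level length $L(t)$ from below in terms of that area. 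Moreover, since you set everything up intrinsically ($g$-length and $g$-area), the isoperimetric step you would need is a Bol-type inequality on a conic surface, which is substantially more delicate than the Euclidean isoperimetric inequality the paper applies after stereographic projection, and which in any case behaves badly under merely a.e.\ curvature information. Once the isoperimetric equality (roundness of levels) is restored as the first step of the rigidity argument, the rest of your outline---concentricity from Cauchy--Schwarz, curvature saturation, placing the orders $\alpha$ at $0$ and $\beta$ at $\infty$, and the direct verification of the converse---does track the paper's proof.
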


The reader is referred to Sect. 3 for the detailed expressions of $u_{a,b}$, $u_{0,b}$, $u_{\rm min}$ and $u_{\rm max}$. All extremal models exhibit a similar geometrical feature: they are obtained by gluing together regions with constant curvature.

\begin{rem}
In the case of $n=1$, we have $\alpha=0$, our results still hold.
\end{rem}

Recall a football is a positive constant curvature sphere with two conic points of equal angle. In terms of the conformal factor on $\mathbb{C}^{*}$, let
\begin{align} \notag
e^{2u_{\rm football}}=\frac{4(1+\alpha)^2 |z|^{2\alpha}}{(1+|z|^{2(1+\alpha)})^2},
\end{align}
then $(\mathbb{C}^{*}, g=e^{2u_{\rm football}}g_0)$ is a football of $\rm curvature \equiv1$, with $D=\alpha0+\alpha\infty$. We denote by $S^2_{\alpha, a}$ a football of curvature $\equiv a$ with two conic points of order $\alpha$. When $\alpha=0$, we get the standard round sphere.

Similarly, let
\begin{align} \notag
e^{2u_{\rm hyp}}=\frac{4(1+\beta)^2 |z|^{2\beta}}{(1-|z|^{2(1+\beta)})^2}, \quad e^{2u_{\rm flat}}=|z|^{2\beta},
\end{align}
then $g=e^{2u_{\rm hyp}}g_0$ and $g=e^{2u_{\rm flat}}g_0$ defines locally a curvature $\equiv-1$ region and a curvature $\equiv0$ region, respectively. Both have a conic singularity at $z=0$ of cone angel $2\pi(1+\beta)$.

The geometric model (1) in Theorem~\ref{T2} is resulted by gluing a curvature $\equiv a$ region containing a conic point of order $\beta$ to a cap of a football $S^{2}_{\alpha, b}$. The geometric model (2) is obtained by attaching a curvature $\equiv 0$ region containing a conic point of order $\beta$ to a cap of a football $S^{2}_{\alpha, b}$. (3) and (4) are both constructed by gluing two caps of two footballs $S^2_{\alpha, b}$ and $S^{2}_{\beta, a}$. In fact, the resulted metrics from gluing are all $C^{1,1}$ across the gluing latitude.

The following illustration might give the reader a better idea for gluing of two footballs. 
\newpage 
\begin{figure}[htbp!]
\begin{center}
\includegraphics[height=90mm]{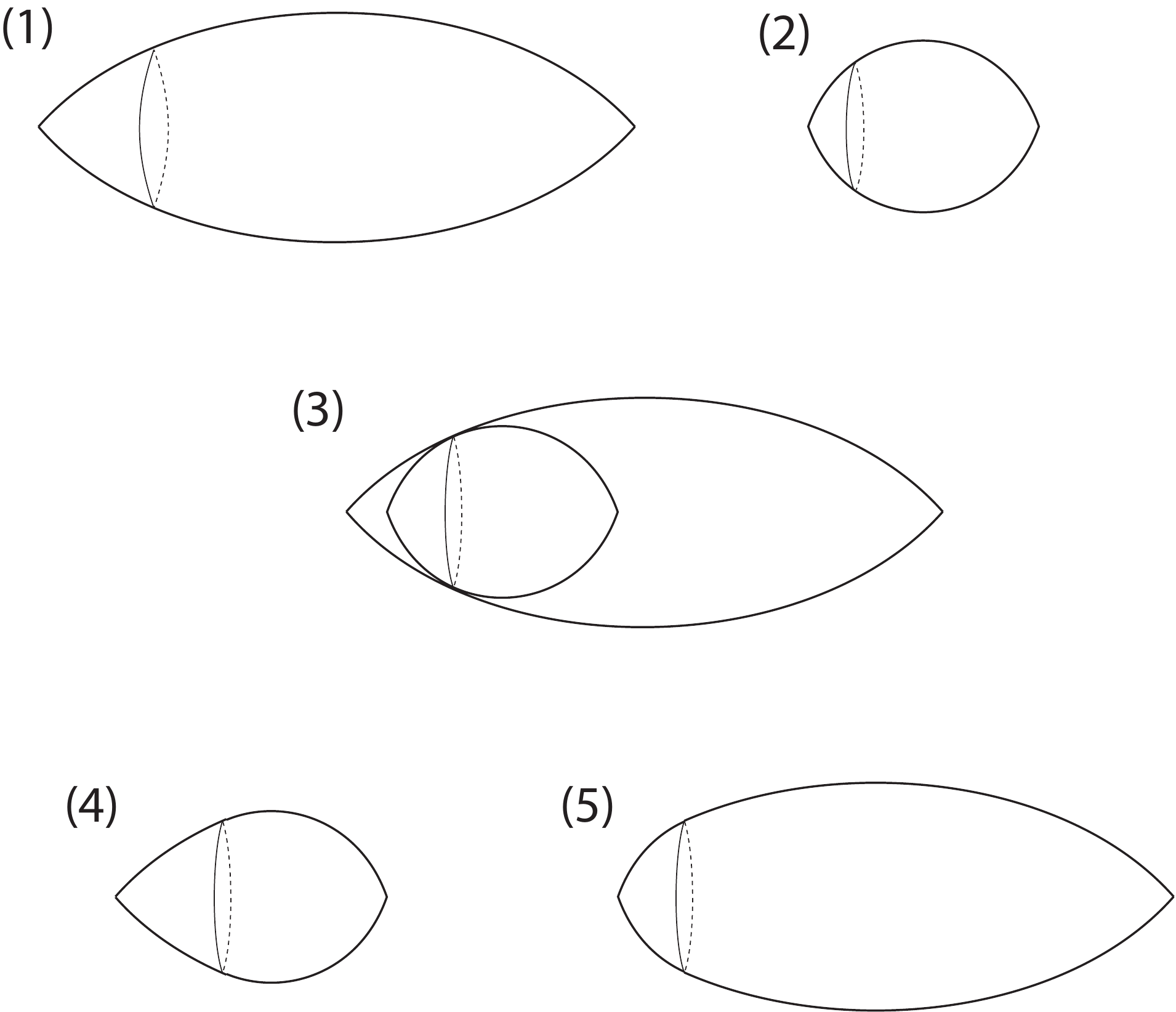}
\caption*{(1) football  $S^{2}_{\beta, a}$;\\
(2) football $S^2_{\alpha, b}$;\\
(3) two footballs placed together in a tangent position;\\
(4) glued football with angels $2\pi(1+\alpha)$ and $2\pi(1+\beta)$, $a\leq K\leq b$,  ${\rm V}_{min}$ model;\\
(5) glued football with angels $2\pi(1+\alpha)$ and $2\pi(1+\beta)$, $a\leq K\leq b$, ${\rm V}_{max}$ model.
}
\label{fig:balls}
\end{center}
\end{figure}

Following our previous works ~\cite{FL1, FL2}, we also find all geometric models above serve as Gromov-Hausdorff limits for any sequence of conic spheres $(S^2, D_l, g_l)$ when their volumes approach to the extremal bound.

\begin{thm}\label{T3} For any sequence of conic metrics $\{g_l\}$ on $(S^2, D_l=\sum_{i=1}^{n} \beta_i p^{l}_i)$ with $a\leq K_g\leq b$. Suppose $D_l$ is either critical with $n\geq3$ or $D_l$ is supercritical. We have
 \begin{enumerate}
                                      \item if ${\rm Vol}(S^2,g_l)\to V_{a,b}$, then $(S^2, D_l=\sum_{i=1}^{n} \beta_i p^{l}_i)$ converges in the Gromov-Hausdorff sense to $(\mathbb{C}^{*}, D=\alpha 0+\beta \infty, g_{\rm extr}=e^{2u_{a,b}}g_0)$, with $p^{l}_2, \cdots, p^{l}_n \to 0$, $p^{l}_1\to \infty$;
                                      \item if ${\rm Vol}(S^2,g_l)\to V_{0,b}$, then $(S^2, D_l=\sum_{i=1}^{n} \beta_i p^{l}_i)$ converges in the Gromov-Hausdorff sense to $(\mathbb{C}^{*}, D=\alpha 0+\beta \infty, g_{\rm extr}=e^{2u_{0,b}}g_0)$, with $p^{l}_2, \cdots, p^{l}_n \to 0$, $p^{l}_1\to \infty$;
                                      \item if ${\rm Vol}(S^2,g_l)\to V_{\rm min}$, then $(S^2, D_l=\sum_{i=1}^{n} \beta_i p^{l}_i)$ converges in the Gromov-Hausdorff sense to $(\mathbb{C}^{*}, D=\alpha 0+\beta \infty, g_{\rm extr}=e^{2u_{\rm min}}g_0)$, with $p^{l}_2, \cdots, p^{l}_n \to 0$, $p^{l}_1\to \infty$;
                                      \item if ${\rm Vol}(S^2,g_l)\to V_{\rm max}$, then $(S^2, D_l=\sum_{i=1}^{n} \beta_i p^{l}_i)$ converges in the Gromov-Hausdorff sense to $(\mathbb{C}^{*}, D=\alpha 0+\beta \infty, g_{\rm extr}=e^{2u_{\rm max}}g_0)$, with $p^{l}_2, \cdots, p^{l}_n \to 0$, $p^{l}_1\to \infty$.
 \end{enumerate}
\end{thm}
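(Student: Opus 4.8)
The plan is to follow the strategy developed in our previous works \cite{FL1, FL2}, upgrading the quantitative sharp bound of Theorem~\ref{T1} together with its rigidity statement Theorem~\ref{T2} into a convergence statement. The guiding principle is that if $\mathrm{Vol}(S^2,g_l)$ converges to an extremal value, then every inequality used to prove Theorem~\ref{T1}---most importantly the isoperimetric inequality applied to the super-level sets of the conformal factor---must become asymptotically sharp, and near-equality in the isoperimetric inequality forces the level sets to become asymptotically round. The first step is to normalize the conformal structure. Writing $(S^2,g_l)\cong(\mathbb{C}^*,e^{2u_l}g_0)$, I would use the M\"obius freedom to place the cone point $p_1^l$ of smallest order $\beta$ at $\infty$ and to center the remaining divisor mass, so that the cluster $p_2^l,\dots,p_n^l$ can be driven toward $0$. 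The critical ($n\ge 3$) and supercritical hypotheses are exactly the regimes where Troyanov's variational approach loses compactness, so such a degeneration can genuinely occur; the content of the theorem is that the only admissible degeneration concentrates the cluster into a single effective cone point of total order $\alpha=|D|-\beta$ at $0$.

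Next I would set up the level set and symmetrization machinery. Using the co-area formula, I pass from $u_l$ to a radial profile recording the volume enclosed by the super-level sets of the conformal factor; the curvature bounds $a\le K_{g_l}\le b$ together with the isoperimetric inequality yield the differential inequalities whose comparison with the extremal ODE produced the bound $V_{a,b}$ (respectively $V_{0,b}$, $V_{\rm min}$, $V_{\rm max}$). Since the volume converges to the extremal value, the isoperimetric deficit integrated over all levels tends to zero, which forces the profiles to converge to the profile of the corresponding extremal model and forces asymptotic rotational symmetry of the metric away from the cone tips. In the cases where the extremal model is built by gluing constant curvature regions, the analysis must track the deficit separately on each region, including the vanishing-curvature pieces, which is where the new analytical tools introduced here are needed.

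I would then promote this convergence of profiles to Gromov-Hausdorff convergence. Uniform curvature and volume bounds give a uniform diameter bound, so the sequence is precompact in the Gromov-Hausdorff topology, and any subsequential limit is a length space carrying an almost everywhere smooth conic metric whose level set profile coincides with that of the extremal model and which is rotationally symmetric. By the rigidity in Theorem~\ref{T2}, such a limit is isometric to the stated model $(\mathbb{C}^*,D=\alpha 0+\beta\infty, e^{2u_*}g_0)$. Tracking the cone points through the normalization then yields $p_2^l,\dots,p_n^l\to 0$ and $p_1^l\to\infty$, and uniqueness of the limit upgrades subsequential convergence to full convergence.

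The main obstacle I anticipate is controlling the collision of the cone points $p_2^l,\dots,p_n^l$ and showing that the concentrated cone point of order $\alpha$ emerges in the limit with no loss of volume or curvature mass. A priori, volume could escape, or the metric could degenerate by forming a neck or bubble, precisely in the region where the points coalesce. Ruling this out requires combining the vanishing-curvature estimates developed here with the sharpness of the volume bound, so that the only profile compatible with the near-extremal volume is the extremal one. The delicate technical point is passing from $L^1$ convergence of the level set profiles to genuine metric convergence across the singular cone tips, where the rearranged profile degenerates and the isoperimetric analysis must be carried out with care.
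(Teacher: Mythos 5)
Your proposal takes essentially the same approach as the paper: the paper's own proof likewise refines the key differential inequality (\ref{E10}) by adding the isoperimetric deficit term, observes that volume convergence to the extremal value forces the deficit to vanish, and then invokes the argument of \cite{FL1} to conclude convergence of the conformal factors and the merging of the cone points $p^l_2,\cdots,p^l_n$ to $0$. Your extra packaging via Gromov--Hausdorff precompactness and the rigidity of Theorem~\ref{T2} is a natural reformulation of the same mechanism, at essentially the same level of detail as the paper's proof.
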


The primary motivation for our work comes from the minimal volume problem. The definition was first introduced by Gromov~\cite{G}. For a closed smooth manifold $M$, the minimal volume of $M$, ${\rm MinVol}(M)$, is defined to be the greatest lower bound of Vol$(M,g)$, where $g$ ranges over all complete Riemannian metrics on $M$ having sectional curvature bounded in absolute value by $1$, i.e.,
\begin{align}\notag
{\rm { MinVol}}(M):=\inf\{ {\rm Vol}(M,g)| |K_g|\leq 1\}.
\end{align}

We list a few results on minimal volume for smooth manifolds. For a smooth closed surface $M$, ${\rm MinVol}(M)=2\pi|\chi|$. There are many known examples of manifolds with ${\rm MinVol}(M)=0$. Such manifolds admit an $F$-structure (c.f. ~\cite{G,CG1,CG2}). If $M$ admits a finite-volume hyperbolic metric, then it is conjectured that this metric attains ${\rm MinVol}(M)$. For open manifolds, if $M$ is a topologically finite surface, not diffeomorphic to $\mathbb{R}^2$, then ${\rm MinVol}(M)=2\pi|\chi(M)|$ (c.f. ~\cite{B}). For $\mathbb{R}^n$, Bavard and Pansu~\cite{BP} proved that ${\rm MinVol}(\mathbb{R}^2)=2\pi(1+\sqrt{2})$. See ~\cite{B} for another proof. For $n\geq3$, Gromov~\cite{G} had shown that ${\rm MinVol}(\mathbb{R}^n)=0$. Mei-Wang-Xu~\cite{MWX} gave a detailed account of ${\rm MinVol}(\mathbb{R}^n)=0$ by gluing construction. In general, it is an interesting and difficult question to compute the minimal volume for a specific manifold.

For a closed surface $M$ and a fixed divisor $D$, we could define the minimal volume for $M$ among all metrics representing $D$ as follows:
\begin{align}\notag
{\rm MinVol}(M, D)=\inf\{ {\rm Vol}(M, g) | g \text{\ represents $D$ with $|K_g|\leq 1$}\}.
\end{align}

Under the curvature bound $|K_g|\leq 1$, it follows from the Gauss-Bonnet formula (\ref{GB}) that
\begin{align} \notag
2\pi|\chi(M,D)|=|\int_{M} K_g dv_g|\leq \int_{M} |K_g|dv_g={\rm Vol}(M,g).
\end{align}
The equality holds if and only if $K_g\equiv 1$ or $K_g\equiv -1$.

It has been shown, by the work of Troyanov~\cite{Tr} and McOwen~\cite{Mc}, that Uniformisation theorem holds for $\chi(M,D)\leq 0$. More precisely, if $\chi(M,D)<0$, then there admits a unique conformal conic metric $g$ representing $D$, with constant curvature $K_g\equiv-1$; if $\chi(M,D)=0$, then there admits flat conic metric $g$ representing $D$. Consequently, ${\rm MinVol}(M, D)=2\pi|\chi(M,D)|$ if $\chi(M,D)\leq0$.

In general, the Uniformisation theorem for conic surfaces with $\chi(M, D)>0$ does not hold. For example, a sphere with one conic singularity (a teardrop) does not support any metric with constant curvature. Note under the assumption $\chi(M,D)>0$ and $\beta_i \in (-1,0)$, $M$ must be a topological $2$-sphere. Through combined works of Troyanov, Chen-Li and Luo-Tian~\cite{Tr, CL,LT}, there is a complete characterization when the Uniformisation theorem holds for conic spheres:
a conic sphere $(S^2,D)$ admits a conic metric with positive constant curvature if and only if
\begin{enumerate}
 \item either $|D|<2\min_{i} \beta_i$,
  \item or $D=\beta_1 p+\beta_2 q$, $\beta_1=\beta_2$.
\end{enumerate}

Hence if $D$ is one of above two cases, ${\rm MinVol}(S^2, D)=2\pi\chi(M,D)$ as well. This leaves ${\rm MinVol}(S^2, D)$ unaccounted for when
$D$ is critical with more than $2$ conic points or $D$ is supercritical. The main theorem of this paper provides an answer:

\begin{thm} \label{T4} For a conic sphere $(S^2, D)$, with $D$ being either critical with $n\geq 3$ or supercritical, we have
\begin{align} \notag
{\rm MinVol}(S^2, D)=2\pi(\alpha-\beta+\sqrt{2(1+\alpha)^2+2(1+\beta)^2})=:V_{\alpha, \beta},
\end{align}
where  $\alpha=|D|-\min_{i}\beta_i$ and $\beta=\min_{i}\beta_i$.
\end{thm}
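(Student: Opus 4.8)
The plan is to recognize Theorem~\ref{T4} as the special case $a=-1$, $b=1$ of Theorem~\ref{T1}, supplemented by an explicit construction showing that the resulting bound is a sharp infimum. Since the constraint $|K_g|\le 1$ is exactly $-1\le K_g\le 1$, the case $a<0$ of Theorem~\ref{T1} applies with $a=-1$, $b=1$. Substituting these values into $V_{a,b}$ and simplifying, using $b-a=2$, $ab=-1$, and $b(\beta+1)^2-a(\alpha+1)^2=(\beta+1)^2+(\alpha+1)^2$, gives
\begin{align}\notag
V_{-1,1}=2\pi\Big[\alpha-\beta+\sqrt{2(1+\alpha)^2+2(1+\beta)^2}\Big]=V_{\alpha,\beta}.
\end{align}
Hence every admissible metric $g$ representing $D$ satisfies ${\rm Vol}(S^2,g)\ge V_{\alpha,\beta}$, so ${\rm MinVol}(S^2,D)\ge V_{\alpha,\beta}$. (As a consistency check, in the critical case $\alpha=\beta$ this reduces to $4\pi(1+\beta)=2\pi\chi(M,D)$, while in the supercritical case $\alpha>\beta$ yields $V_{\alpha,\beta}>2\pi\chi(M,D)$ strictly, matching the failure of uniformization.)

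The reverse inequality requires a sequence of metrics representing the \emph{fixed} divisor $D$, each with $|K|\le 1$, whose volumes tend to $V_{\alpha,\beta}$. The difficulty is that the extremal model of Theorem~\ref{T2}(1) attaining $V_{\alpha,\beta}$ carries the two-point divisor $\alpha\,0+\beta\,\infty$, whereas $D=\sum_{i=1}^n\beta_ip_i$ has $n\ge 3$ points in the critical case. I would therefore keep the conic point of minimal order $\beta=\beta_1$ fixed and split the order-$\alpha$ point $0$ of the extremal model into the remaining conic points $p_2,\dots,p_n$ of orders $\beta_2,\dots,\beta_n$ (with $\sum_{i\ge 2}\beta_i=\alpha$), clustering them inside a shrinking disk $B_\delta$ around $0$. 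Concretely, outside $B_\delta$ one retains the extremal metric $g_{-1,1}$; inside one inserts a low-volume piece carrying the clustered singularities, say a flat ($K\equiv 0$) cone region whose developing map has total cone holonomy $2\pi\alpha$ matching the outer order-$\alpha$ cone, and one interpolates across $\partial B_\delta$. A Gauss--Bonnet bookkeeping on $B_\delta$ shows that the interior curvature-plus-cone contribution is unchanged by the replacement, so with $|K|\le 1$ the inserted region can be arranged to have volume $O\big(\delta^{2(1+\beta_2)}\big)\to 0$; letting $\delta\to 0$ produces metrics $g_\delta$ representing $D$ with $|K_{g_\delta}|\le 1$ and ${\rm Vol}(S^2,g_\delta)\to V_{\alpha,\beta}$.

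The main obstacle is making this splitting rigorous: one must keep the curvature inside $[-1,1]$ throughout the transition collar, ensure the metric is genuinely conic of the prescribed orders, and verify that the inserted volume vanishes. I expect this is cleanest at the level of the conformal factor, perturbing $u_{-1,1}$ by replacing its single $\alpha\log|z|$ singularity at $0$ with $\sum_{i\ge 2}\beta_i\log|z-q_i|$ for cluster points $q_i\to 0$ and controlling $\Delta u$, hence $K=-e^{-2u}\Delta u$, on the collar. Combining the two inequalities gives ${\rm MinVol}(S^2,D)=V_{\alpha,\beta}$. Finally, Theorem~\ref{T3} identifies the Gromov--Hausdorff limit of any such volume-minimizing sequence as the two-point extremal model, confirming both that $V_{\alpha,\beta}$ is the sharp infimum and that, for $n\ge 3$, it is not attained within the class of metrics representing $D$.
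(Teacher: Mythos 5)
Your proposal follows essentially the same route as the paper: the lower bound is exactly the paper's application of Theorem~\ref{T1} with $a=-1$, $b=1$ (your simplification to $V_{\alpha,\beta}$ is correct), and the upper bound is obtained by approximating the two-point extremal model $e^{2u_{-1,1}}$ by metrics representing the full divisor $D$, with all conic points other than the order-$\beta$ one clustered in the positive-curvature region near $0$ --- precisely the construction the paper invokes (citing the strategy of Theorem 3.1 of~\cite{FL2}, where it is noted that all but one conic point are placed in the positive-curvature part). The only difference is presentational: you sketch the gluing/collar-interpolation details that the paper delegates to~\cite{FL2}.
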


\begin{rem}
Based on the extremal models given by Theorem ~\ref{T2}, in our setting for $D$, ${\rm MinVol}(S^2, D)$ is only achieved if $D$ is supercritical with one or two conic points. The geometric model realizing ${\rm MinVol}(S^2, D)$ consists of two parts: one part contains a conic singularity of order $\alpha$ or a smooth part if $\alpha=0$ with curvature $\equiv 1$, the other part contains a conic singularity of order $\beta$ with curvature $\equiv-1$. This picture is somewhat similar to the geometric extremal realizing ${\rm MinVol}(\mathbb{R}^2)$, which is a spherical cap glued to the unbounded portion of the pseudosphere~\cite{B}.
\end{rem}

As a byproduct of Theorem~\ref{T1}, we get the following pinching estimate for conic spheres if its Gaussian curvature is bounded from below and above by two positive constants.
\begin{cor} \label{C1} For a supercritical conic sphere $(S^2, D, g)$, let $\alpha:=|D|-\min_{i}\beta_i$ and $\beta:=\beta_1$. Suppose the Gaussian curvature of $g$ satisfy that $0<a\leq K_g\leq b$, then
\begin{align} \label{Te1}
\frac{a}{b}\leq \frac{(\beta+1)^2}{(\alpha+1)^2}.
\end{align} The equality holds if and only if $(S^2, D, g)$ is isometric to $(\mathbb{C}^{*}, D=\alpha 0+\beta \infty, g_{extr}=e^{2u}g_0)$, where
\begin{align} \notag
e^{2u}=\begin{cases}
\frac{1}{b}\frac{4(1+\alpha)^2 |z|^{2\alpha}}{(1+|z|^{2(1+\alpha)})^2} \quad |z|\leq 1,\\
\frac{1}{a}\frac{4(1+\beta)^2}{(1+\frac{1}{|z|^{2+2\beta}})^2|z|^{4+2\beta}} \quad |z|\geq 1.
\end{cases}
\end{align}
\end{cor}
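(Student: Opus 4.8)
The plan is to read inequality~\eqref{Te1} directly off the $a>0$ case of Theorem~\ref{T1}, where it is already encoded in the form of the volume bounds. First I would note that both $V_{\rm min}$ and $V_{\rm max}$ contain the radical
\[
\sqrt{(b-a)\bigl(b(\beta+1)^2-a(\alpha+1)^2\bigr)},
\]
and that Theorem~\ref{T1} asserts the genuine real inequalities $V_{\rm min}\leq {\rm Vol}(S^2,g)\leq V_{\rm max}$. Since ${\rm Vol}(S^2,g)$ is a finite positive real number sandwiched between these bounds, the radicand cannot be negative, whence
\[
(b-a)\bigl(b(\beta+1)^2-a(\alpha+1)^2\bigr)\geq 0.
\]

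Next I would dispose of the factor $b-a$. Because $(S^2,D)$ is supercritical, $\alpha=|D|-\beta>2\beta-\beta=\beta$, so $(\alpha+1)^2>(\beta+1)^2$. If $a=b$ held, then $K_g\equiv a>0$ would be a positive constant, which the characterization of Troyanov, Chen--Li and Luo--Tian recalled above forbids for a supercritical divisor; equivalently, $a=b$ would force \eqref{Te1} to read $1\leq (\beta+1)^2/(\alpha+1)^2<1$, a contradiction. Hence $b>a>0$, and dividing the displayed inequality by $b-a>0$ and then by $b(\alpha+1)^2>0$ yields exactly $a/b\leq (\beta+1)^2/(\alpha+1)^2$.

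For the equality statement I would use that $a/b=(\beta+1)^2/(\alpha+1)^2$ is equivalent to the vanishing of the radicand, which in turn forces $V_{\rm min}=V_{\rm max}$. Thus ${\rm Vol}(S^2,g)$ is pinched to this common value and so simultaneously attains $V_{\rm min}$ and $V_{\rm max}$; by parts (3) and (4) of Theorem~\ref{T2}, which coincide precisely when the radical degenerates, $(S^2,D,g)$ must be isometric to the extremal model $(\mathbb{C}^*, D=\alpha 0+\beta\infty, e^{2u}g_0)$. It then remains to identify this model with the piecewise conformal factor in the statement: on $|z|\leq 1$ it is $\tfrac1b e^{2u_{\rm football}}$ of cone order $\alpha$ (curvature $\equiv b$), the cap of $S^2_{\alpha,b}$; while on $|z|\geq 1$, after the inversion $w=1/z$, the second branch becomes $\tfrac1a$ times the football metric of order $\beta$ (curvature $\equiv a$), the cap of $S^2_{\beta,a}$.

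The main obstacle I anticipate is the equality analysis rather than the inequality. One must trace the explicit expressions $u_{\rm min},u_{\rm max}$ from Section~3 through the limit in which the radical degenerates, confirm that they collapse to the single glued-football metric displayed in the corollary, and verify the $C^{1,1}$ matching of the curvature-$b$ and curvature-$a$ caps across the latitude $|z|=1$, consistent with $a>0$ so that both caps are spherical. The inequality itself, by contrast, is essentially a reality constraint on the bounds already supplied by Theorem~\ref{T1}.
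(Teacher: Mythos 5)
Your proposal is correct and follows essentially the same route as the paper: the inequality is precisely the reality (discriminant) constraint on the quadratic volume inequality underlying Theorem~\ref{T1} (this is the paper's Corollary~\ref{cor1}), and equality forces $V_{\rm min}=V_{\rm max}$, whence the equality models of Theorem~\ref{T2} identify the metric with the glued-football conformal factor, the gluing radius working out to $r=1$. Your explicit exclusion of the degenerate case $a=b$ (via nonexistence of constant-curvature metrics on supercritical spheres) is a small step the paper leaves implicit, but it does not change the approach.
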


\begin{rem}
 The authors ~\cite{FL2} have obtained the estimate (\ref{Te1}) by considering the `least-pinched' metric problem on conic spheres. More precisely, one asks for the greatest upper bound of the pinching constant $\rho(g):=\frac{\min K_g}{\max K_g}$, where $g$ ranges over all conformal conic metrics representing $D$ with positive continuous curvature $K_g$. Such perspective was first taken on by Bartolucci~\cite{Ba} following the analysis of Chen-Lin~\cite{ChLi}. We recover our result~\cite{FL2} from the volume consideration.
\end{rem}

The basic idea of this paper follows closely to that of ~\cite{FL2}. Via stereographic projection, we study geometric quantities associated with corresponding conformal factors. The main tools are co-area formula and isoperimetric inequality. However, when the curvature lower bound is non-positive, extra effort is needed to take care of two subtle technical difficulties: First, the continuity of the distribution function of the conformal factor no longer holds; Second, absolutely continuity  is lost from similar consideration. We have developed some careful analysis to get around these obstacles.

The minimal volume question we consider in this paper can be dually interpreted as minimising the $L^{\infty}$-norm of the Gaussian curvature over all conic metrics with fixed volume. It shares a common feature with the curvature pinching problem considered in~\cite{FL2}:  both  are non-variational geometric extremal problems. Thus, it is expected that the geometric model for extremals may possibly lose smoothness. However, the use of the isoperimetric inequality forces the extremal to gain rotational symmetry so that we have clear geometric pictures and the Gromov-Hausdorff convergence in the corresponding moduli. We hope the study of these non-variational extremal problems will shed some light on similar questions. It is our intention to discuss corresponding topics for higher dimensional conic spheres with scalar curvature bound.

An outline of the paper is as follows. In Section 2, we prove the main theorem on the volume bound. In Section 3, we furnish the proofs of other results. Since the arguments are quite similar to those in ~\cite{FL2}, the presentation shall be brief.

{{\bf Acknowledgements}: The second author wishes to thank Prof. Jiaqiang Mei for raising the question on the minimal volume for conic spheres. }

\section{Proof of the main theorem}
In this section we follow the setup of~\cite{FL2} to estimate volume for conic spheres in terms of curvature bounds.

Let us first set up proper notations. Given a divisor $D= \sum_{i=1}^{n}\beta_{i}p_{i}$ on $S^2$ and $-1<\beta_{i}<0, i=1,\cdots,n$. By stereographic projection, we identify $S^{2}$ with $\mathbb{C}^{*}$. Let $z_i\in\mathbb{C}$ be the image of $p_{i}$ under the stereographic projection. Without loss of generality, we assume $z_{1}=\infty$. Let $g_{0}$ be the standard Euclidean metric on $\mathbb{C}$. Up to conformal transformations, we can assume the given conic metric $g$ is of the form $g=e^{2u}g_0$. Then the Gaussian curvature of $g$ satisfies
\begin{align} \label{GaussE}
\Delta u=-K_ge^{2u}, \quad \text{for $z\neq z_i$}.
\end{align}
The conic nature of $g$ is equivalent to the asymptotic behavior of $u$ near $z_{i}$:
\begin{itemize}
  \item  $u\sim\beta_{i}\ln|z-z_{i}|$
  as $z\to z_{i}, i>1$;
  \item  $u\sim-(\beta_{1}+2)\ln|z|$
  as $|z|\to z_{1}=\infty$.
\end{itemize}

Let $\alpha:=|D|-\beta_{1}=\sum_{i=2}^{n}\beta_{i}, \beta:=\beta_1$. We now assume that $(S^{2}, D)$ is supercritical or critical with more than 2 conic points. It is equivalent to $ \beta\leq\alpha\leq0$. Note we allow $\alpha=0$, which means there is only one conic point of order $\beta$. Also for simplicity, we denote $V=\rm{Vol}(S^{2},D,g)$.

In this section, we give a sharp estimate for $V$ in terms of curvature bounds. We would explore some geometric quantities associated with the conformal factor $u$ and apply co-area formula and isoperimetric inequality as in our previous works~\cite{FL1,FL2}. It turns out the volume $V$ is involved in an elementary inequality.

\begin{thm}\label{addt1}Let a supercritical or critical conic sphere $(S^2, D, g)$ be given as above and suppose it satisfy the curvature bound
$a\leq K_g\leq b$.

Then if $a=0$, we have
\begin{equation} \notag
V\geq V_{0,b}:=\frac{\pi (2+|D|)^{2}}{b(1+\beta)};
\end{equation}
if $a<0$, we have
\begin{equation} \notag
{\rm Vol}(S^2,g)\geq V_{a,b}:=2\pi[\frac{\beta+1}{a}+\frac{\alpha+1}{b}-\frac{\sqrt{(b-a)(b(\beta+1)^{2}-a(\alpha+1)^{2})}}{ab}];
\end{equation}
and if $a>0$, we have
\begin{align} \notag
V_{\rm min}:=&2\pi[\frac{\beta+1}{a}+\frac{\alpha+1}{b}-\frac{\sqrt{(b-a)(b(\beta+1)^{2}-a(\alpha+1)^{2})}}{ab}]\leq \\ \notag
{\rm Vol}(S^2,g)&\leq 2\pi[\frac{\beta+1}{a}+\frac{\alpha+1}{b}+\frac{\sqrt{(b-a)(b(\beta+1)^{2}-a(\alpha+1)^{2})}}{ab}]:=V_{\rm max}.
\end{align}
\end{thm}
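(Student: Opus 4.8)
The plan is to carry out a level-set analysis of the conformal factor $u$ on $\mathbb{C}^{*}$, combining the co-area formula with a conical isoperimetric inequality as in \cite{FL2}, and then to reduce the geometric statement to an elementary optimization. Recall that $u\to+\infty$ at the finite cone points $z_2,\dots,z_n$ (of total order $\alpha\le 0$) and $u\to-\infty$ at $z_1=\infty$ (of order $\beta$). For a regular value $t$ I introduce the super-level set $\Omega_t=\{u>t\}$, its $g$-volume $V(t)=\int_{\Omega_t}e^{2u}\,dA_0$, the $g$-length $L(t)=\int_{\{u=t\}}e^{u}\,ds_0$ of the level curve, and the curvature mass $\kappa(t)=\int_{\Omega_t}K_g\,dV_g$.

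First I would establish two pointwise-in-$t$ inequalities. Applying the divergence theorem on $\Omega_t$, after excising small disks around the interior cone points whose net flux is the concentrated curvature $2\pi\alpha$, yields the identity $\int_{\{u=t\}}|\nabla u|\,ds_0=\kappa(t)-2\pi\alpha$; coupling this with the co-area relation $-V'(t)=\int_{\{u=t\}}e^{2t}|\nabla u|^{-1}\,ds_0$ and the Cauchy-Schwarz inequality gives $L(t)^2\le\bigl(-V'(t)\bigr)\bigl(\kappa(t)-2\pi\alpha\bigr)$, with equality precisely when $|\nabla u|$ is constant along $\{u=t\}$. The conical Bol-Fiala isoperimetric inequality for a disk carrying cone mass $\alpha$ with $K_g\le b$ supplies the complementary bound $L(t)^2\ge 4\pi(1+\alpha)V(t)-bV(t)^2$, with equality iff the enclosed region is a constant-curvature-$b$ cap. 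Inserting the curvature bounds $a\,V(t)\le\kappa(t)\le b\,V(t)$ then turns these into closed differential inequalities for $V(t)$.

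Next I would integrate these inequalities across the two ends of $S^2$: on the $\alpha$-side (super-level sets collapsing onto the cone points) the upper bound $b$ governs, the equality case being a cap of the football $S^2_{\alpha,b}$; on the $\beta$-side (sub-level sets collapsing onto $\infty$, which the inversion $w=1/z$ converts into an ordinary cone point of order $\beta$) the lower bound $a$ plays the symmetric role, the equality case being a cap of $S^2_{\beta,a}$ --- a hyperbolic cap when $a<0$ and a flat cap when $a=0$. Cutting $S^2$ along a single common level curve $\{u=t^\ast\}$ of $g$-length $L^\ast$ produces volumes $V_1,V_2$ with $V=V_1+V_2$ satisfying the length-matching relation $4\pi(1+\alpha)V_1-bV_1^2=(L^\ast)^2=4\pi(1+\beta)V_2-aV_2^2$, while the exact Gauss-Bonnet identity controls $bV_1+aV_2$ around $2\pi(2+|D|)$. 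Eliminating $L^\ast$ confines $V=V_1+V_2$ between the two roots of a single quadratic, which are exactly $V_{\rm min}$ and $V_{\rm max}$; for $a<0$ only the lower root is admissible (the matching curve is unbounded), yielding the one-sided bound $V_{a,b}$, and the borderline $a=0$ case follows by letting $a\to0^-$, which produces $V_{0,b}$ after the expected cancellation of the singular terms.

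The main obstacle is exactly the one the authors flag. When $a\le 0$ the curvature may vanish, so $u$ can be harmonic --- even locally constant --- on sets of positive area; then the distribution function $t\mapsto V(t)$ need not be continuous (a level set of positive measure makes it jump) nor absolutely continuous, and the passage from the pointwise inequalities to their integrated forms is no longer automatic. I would therefore isolate the regular part $\{|\nabla u|>0\}$, prove the inequalities there, and treat the critical and flat levels separately, checking that the flat pieces are precisely the constant-curvature-$0$ regions accounted for by the $a=0$ model and that neither the inequalities nor the rigidity statement is lost in the limit. Establishing this rigidity --- that equality throughout forces, via the equality cases in Cauchy-Schwarz and in Bol-Fiala, rotational symmetry and hence the explicit glued-football metrics --- is the other delicate point, but the genuinely new analytic work lies in controlling the non-absolutely-continuous distribution function.
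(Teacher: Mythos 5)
Your setup --- the flux identity $\int_{\{u=t\}}|\nabla u|\,ds_0=\kappa(t)-2\pi\alpha$, the co-area formula, and Cauchy--Schwarz giving $L(t)^2\le(-V'(t))(\kappa(t)-2\pi\alpha)$ --- agrees with the paper's starting point, but the mechanism by which you make the curvature \emph{lower} bound $a$ enter is not valid, and this is where the proof breaks. A Bol--Fiala inequality requires an \emph{upper} curvature bound; the hypothesis $K\ge a$ yields no inequality of the form $(L^\ast)^2\ge 4\pi(1+\beta)V_2-aV_2^2$ on the $\beta$-side. Indeed, take $a<0$, $\beta=0$ and a flat round disk (so $K\equiv 0\ge a$): then $(L^\ast)^2=4\pi V_2<4\pi V_2-aV_2^2$, so the claimed inequality fails; the reverse inequality fails as well for thin flat fingers (large $L^\ast$, small $V_2$), so there is no general statement in either direction. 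Moreover your ``length-matching relation'' $4\pi(1+\alpha)V_1-bV_1^2=(L^\ast)^2=4\pi(1+\beta)V_2-aV_2^2$ is asserted as an equality, which holds only for the extremal constant-curvature glued models, never for a general competitor metric. If one replaces it by the inequalities that are actually true --- Bol--Fiala with the upper bound $b$ on \emph{both} sides, together with Gauss--Bonnet and $aV_i\le\kappa_i\le bV_i$ --- then $a$ decouples from the isoperimetric information and the best one can extract is the non-sharp bound $V\ge 2\pi(2+|D|)/b$ (and $V\le 2\pi(2+|D|)/a$ when $a>0$), not $V_{a,b}$, $V_{\min}$, $V_{\max}$.

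The paper makes $a$ enter by a different route and integrates over \emph{all} levels rather than cutting at one. It changes variables from $t$ to the volume parameter $s=\int_{\{u>t\}}e^{2u}$, proving that $t(s)$, $\mathbb{A}(s):=A(t(s))$ and a suitably interpolated $\mathbb{B}(s)$ are Lipschitz; this change of variables is also its answer to the failure of absolute continuity, which you propose to handle only vaguely by ``isolating the regular part.'' The lower bound then appears as the pointwise Lipschitz estimate $a\le\mathbb{A}'(s)\le b$. The Euclidean isoperimetric inequality plus co-area give the differential inequality $\frac{d}{ds}\bigl[e^{2t(s)}\mathbb{B}(s)\bigr]\ge 1+\alpha-\frac{\mathbb{A}(s)}{2\pi}$, whose integration over $[0,V]$ yields $\int_0^V\mathbb{A}(s)\,ds\ge 2\pi(1+\alpha)V$; the sharp quadratic inequality $abV^{2}-4\pi(a(1+\alpha)+b(1+\beta))V+\chi^{2}\le 0$, with $\chi=2\pi(2+|D|)$, then follows from the elementary extremal problem of maximizing $\int_0^V f$ over functions with $f(0)=0$, $f(V)=\chi$, $a\le f'\le b$ (the paper's Lemma on the extremal problem). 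Some version of this optimization step --- which is what genuinely couples \emph{both} curvature bounds to the isoperimetric inequality --- is missing from your single-cut argument and would need to be reconstructed for the proof to go through.
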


\begin{rem}
It is a simple computation to show that when $a\to 0$, we have $$V_{a,b}\to V_{0,b}.$$
\end{rem}

We break of  proof of Theorem~\ref{addt1} into several steps.

\subsection{Level sets and related functions}

Define
\[
\Omega(t):=\{u>t\}\subset\mathbb{C}, \quad A(t):=\int_{\Omega_{t}}Ke^{2u},\quad  B(t):=|\Omega_{t}|, \quad s(t):=\int_{\Omega_{t}}e^{2u},
\]
where integrals are with respect to the Euclidean metric $g_{0}$ and $|\cdot|$ stands for the Lebesgue measure. Since $u(z)\to -\infty$ as $|z|\to \infty$, we know $B(t)$ is finite for any $t\in\mathbb{R}$.

The Gauss-Bonnet formula yields
\[
\int_{\mathbb{C}}Ke^{2u}=2\pi(2+|D|)=\lim_{t\to-\infty}A(t).
\]

In view of the asymptotic behavior of $u$ at singularities, we have $z_{i}\in\Omega(t), 2\leq i \leq n$, for any $t\in\mathbb{R}$. It then follows from the equation (\ref{GaussE}) that
\[
A(t)=\int_{\Omega(t)}Ke^{2u}=\int_{\Omega(t)}-\Delta u=\int_{\partial\Omega(t)}|\nabla u|+2\pi\alpha.
\]

\subsection{Critical Set}
It is clear from the definition that $s(t)$ is strictly decreasing with $s(-\infty)=V$. It also follows from definition that $B(t)$ and $s(t)$ are both continuous from right, with possible jump discontinuity at $t\in\mathbb{R}$ if and only if the level set $\{u=t\}$ has non-trivial Lebesgue measure. Define  $$s(t_{0}-):=\lim_{t\to t_0^{-}}s(t).$$
$$\mathcal{T}:=\{t\in\mathbb{R},\ s(t)\neq s(t-)\}.$$
Obviously, the set of discontinuous points $\mathcal T$ is at most countable. For future use, we denote by $C$ the set of critical points of $u$, i.e., $$C=\{z|\nabla u(z)=0\}.$$

\begin{rem} If $a>0$, we have $K\geq a>0$, the argument in~\cite{FL2} implies that $|C|=0$ (see also Lemma~\ref{add2} below). Thus according to the co-area formula (Lemma 2.3 of~\cite{BZ}), all  functions defined above is absolutely continuous with respect to $t$. Hence, $\mathcal{T}=\emptyset$, we can proceed as in~\cite{FL1,FL2}.
\end{rem}
For the general case, when $|C|>0$, functions $B$ and $s$ are not  necessarily absolutely continuous with respect to $t$. Instead, we use $s$ as our variable, and we shall prove that all relevant functions become absolutely continuous with respect to $s$.

We define some special subsets of the critical set $C$ and study their properties.  Define, for $d\in\mathbb{N}$,
\begin{align}\notag
\mathcal{N}(d):=\{ z|&\quad\nabla^{\gamma} u(z)=0 \quad \text{for all $|\gamma|=d$} \\ \notag
 and&\quad  \nabla^{\gamma}u (z)\neq0 \quad \text{for some $|\gamma|=d+1$}\},
\end{align}
where $\gamma$ the multi-index for mixed partial derivatives. For any $t\in\mathbb R$, define
\begin{align}\notag
\mathcal{N}(0)_{t}:=\{ z| u(z)=t,\ \ \ \nabla u(z)\neq 0\}.
\end{align}
%Clearly, we have
%$$\cup_{d=1}^{\infty}\mathcal{N}(d)\subset C.$$

For future use, we first prove the following
\begin{claim}\label{claim1}For any $t\in\mathcal R$, we have
\begin{align} \label{e2}
|\mathcal{N}(0)_{t}|=0 \quad \text{and} \quad |\mathcal{N}(d)|=0, \ \ \ d\geq 1.
\end{align}
\end{claim}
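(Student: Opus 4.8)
The plan is to prove the two assertions separately, reducing each to the elementary fact that a regular level set of a $C^1$ function is a $C^1$ curve and hence carries zero two--dimensional Lebesgue measure, and then to upgrade this local statement to a global one using that $\mathbb{C}\setminus\{z_2,\dots,z_n\}$ is second countable.

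For the first assertion, I fix $t\in\mathbb{R}$ and take any $z_0\in\mathcal{N}(0)_t$, so that $u(z_0)=t$ and $\nabla u(z_0)\neq 0$. Since $u$ is $C^2$ away from the conic points, the implicit function theorem shows that in a neighborhood $U$ of $z_0$ the level set $\{u=t\}$ is a $C^1$ embedded curve; in particular $\mathcal{N}(0)_t\cap U\subseteq\{u=t\}\cap U$ has measure zero. Ranging over all $z_0$ produces an open cover of $\mathcal{N}(0)_t$ by such neighborhoods, and by second countability I extract a countable subcover, whence $|\mathcal{N}(0)_t|=0$.

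For the second assertion, I fix $d\geq 1$ and take $z_0\in\mathcal{N}(d)$. By definition there is a multi-index $\gamma'$ with $|\gamma'|=d+1$ and $\partial^{\gamma'}u(z_0)\neq 0$; writing $\gamma'=\gamma+e_j$ with $|\gamma|=d$, the function $h:=\partial^{\gamma}u$ is $C^1$ near $z_0$, satisfies $h(z_0)=0$ because every order-$d$ derivative vanishes on $\mathcal{N}(d)$, and has $\partial_j h(z_0)=\partial^{\gamma'}u(z_0)\neq 0$, so $\nabla h(z_0)\neq 0$. Every point of $\mathcal{N}(d)$ near $z_0$ satisfies $h=0$, so $\mathcal{N}(d)$ is locally contained in the regular zero set $\{h=0\}$, which is again a $C^1$ curve of measure zero; the same countable-cover argument yields $|\mathcal{N}(d)|=0$.

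The delicate point, and the part I expect to require real care, is regularity: the reduction for $\mathcal{N}(d)$ presupposes that $u$ is $C^{d+1}$ near the relevant points, whereas the standing hypothesis only furnishes $u\in C^2$ (which suffices for $\mathcal{N}(0)_t$ and for $\mathcal{N}(1)$, but not for higher strata). I would therefore first upgrade the regularity of $u$ away from the conic points by bootstrapping the curvature equation $\Delta u=-K_g e^{2u}$: once $K_g$ is taken to be smooth, standard elliptic $L^p$ and Schauder estimates give $u\in C^{\infty}$ locally, legitimizing differentiation to arbitrary order. Securing this smoothness --- and, if one insists on merely bounded $K_g$, reducing to the smooth case by approximation while preserving the measure-zero conclusion --- is the main obstacle; the measure-theoretic core outlined above is then routine.
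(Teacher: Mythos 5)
Your measure-theoretic core is exactly the paper's argument: at a point of $\mathcal{N}(0)_t$ (resp.\ $\mathcal{N}(d)$) apply the implicit function theorem to $u$ (resp.\ to a $d$-th order derivative $h=\nabla^{\gamma}u$ whose gradient is nonzero there), conclude the set is locally contained in a regular level curve of a $C^1$ function, hence locally null, and finish by a countable cover. The paper writes this out for $\mathcal{N}(0)_t$ and $\mathcal{N}(1)$ (using $u_x$ in place of your general $h$) and simply declares the case $d\geq 2$ ``similar,'' so your general-$d$ reduction is precisely the omitted argument.

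The one place you diverge is the regularity discussion, and there your diagnosis is half right but your proposed cure does not work. You are correct that for $d\geq 2$ the definition of $\mathcal{N}(d)$ requires derivatives of order $d+1$, which $u\in C^2$ does not provide; the paper silently ignores this. But the elliptic bootstrap you suggest is unavailable under the paper's hypotheses: $K_g$ is only assumed bounded (it is continuous because $u$ is $C^2$, but nothing more), and $\Delta u=-K_ge^{2u}$ with merely continuous right-hand side does not yield $u\in C^3$; moreover, smoothing $K_g$ changes $u$ and hence the sets $\mathcal{N}(d)$ themselves, so the approximation route you gesture at cannot ``preserve the measure-zero conclusion'' for the original metric. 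The honest resolution is simpler: the claim is only ever invoked for $\mathcal{N}(0)_t$ and $\mathcal{N}(1)$ --- in Lemma~\ref{add2} and in the proof of Lemma~\ref{add4} via (\ref{e1}) --- and for these $C^2$ regularity suffices, exactly as you note. So your proof, restricted to the strata that are actually defined and actually used, is complete and matches the paper; the ``main obstacle'' you flag is a defect of the statement's generality, not something your argument (or the paper's) needs to overcome.
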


\begin{proof}$\forall z\in\mathcal{N}(0)_{t}$, without loss of generality, we may assume that $\frac{\partial u}{\partial x}(z)\neq0$. It then follows from the implicit function theorem that there exists $\rho>0$, such that $\{u=t\}\cap B_\rho(z)$ is the graph of some function $x=g(y)$. Clearly $\mathcal{N}(0)_{t}\cap B_{\rho}(z)\subset \{u=t_0\}\cap B_\rho(z)$, from which we infer that $|\mathcal{N}(0)_{t}|=0$.

Similarly, $\forall z\in\mathcal{N}(1)$, without loss of generality, we may assume that $\frac{\partial^2 u}{\partial x\partial y}(z)\neq0$. It follows from implicity function theorem that there exists $\rho'>0$, such that $\{u_x(z)=0\}\cap B_{\rho'}(z)$ is the graph of some function $y=h(x)$. Noticing that
$\mathcal{N}(1)\cap B_{\rho'}(z)\subset \{u_x(z)=0\}\cap B_{\rho'}(z)$, we get $|\mathcal{N}(1)|=0$ as well.
The proof for the general $d$ is similar, which we will omit here. We have thus finished the proof of Claim~\ref{claim1}.
\end{proof}

\begin{lem} \label{add2}
If $\Delta u\neq 0$, we have that $|C|=0$.
\end{lem}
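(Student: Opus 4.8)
The plan is to show that the hypothesis $\Delta u\neq 0$ forces the entire critical set $C$ into the single stratum $\mathcal{N}(1)$, whose Lebesgue measure has already been controlled in Claim~\ref{claim1}.

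I would fix an arbitrary point $z\in C$ and note that, since critical points lie away from the singularities $z_i$, the function $u$ is $C^2$ near $z$, so that $\nabla u(z)=0$ says precisely that both first-order partial derivatives vanish at $z$. The observation I would build on is that $\Delta u(z)=u_{xx}(z)+u_{yy}(z)$ is exactly the trace of the Hessian of $u$ at $z$. Since $\Delta u(z)\neq 0$, the Hessian cannot be the zero matrix, so at least one of $u_{xx}(z)$, $u_{yy}(z)$ is nonzero; in particular some second-order partial derivative of $u$ is nonvanishing at $z$. Matching this against the definition of $\mathcal{N}(1)$ --- all first-order derivatives vanishing together with some second-order derivative nonvanishing --- this places $z\in\mathcal{N}(1)$.

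Since $z\in C$ is arbitrary, this gives the inclusion $C\subseteq\mathcal{N}(1)$, and then Claim~\ref{claim1}, which provides $|\mathcal{N}(1)|=0$, immediately yields $|C|=0$.

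I do not expect a substantive obstacle here: the whole argument reduces to the elementary identity $\Delta u=\mathrm{tr}\,\mathrm{Hess}\,u$, which converts the scalar nonvanishing of $\Delta u$ at a critical point into the nonvanishing of some entry of the Hessian there, so that the higher strata $\mathcal{N}(d)$ with $d\geq 2$ never enter. The one point I would treat with care is regularity: each critical point must lie in the region where $u$ is genuinely $C^2$ so that the Hessian is defined, which is guaranteed because the asymptotics of $u$ near the conic points exclude the $z_i$ from $C$. The finer decomposition of Claim~\ref{claim1} into all the $\mathcal{N}(d)$ is, by contrast, what will be needed later when $K$ --- and hence $\Delta u$ --- is permitted to vanish.
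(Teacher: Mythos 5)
Your proof is correct and is essentially the paper's own argument: the paper's proof reads, in full, that $\Delta u\neq 0$ implies $C=\mathcal{N}(1)$ and then invokes Claim~\ref{claim1}, which is exactly your trace-of-the-Hessian observation (giving $C\subseteq\mathcal{N}(1)$, the inclusion actually needed) followed by $|\mathcal{N}(1)|=0$. You have merely spelled out the details, including the regularity point, that the paper leaves implicit.
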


\begin{proof}
$\Delta u\neq 0$ implies that $C={\mathcal N}(1)$. The conclusion is then obvious.
\end{proof}

\subsection{New variable and absolute continuity}
Now we want to define the `inverse function' for $s(t)$. Let ${\mathcal T}=\{\psi_n, \ n=1,2,\cdots\}$. $\{(s(\psi_n), s(\psi_n-))\}$ is then a family of disjoint open intervals in $[0,V]$. Set $\mathcal{S}=\cup_{n=1}^{\infty}(s(\psi_n), s(\psi_n-)$. Define
\begin{align}\notag
t(s)=\begin{cases} t,  \quad t\in\mathbb{R} \text{  such that $s=s(t)$, if $s\notin \mathcal{S}$};\\
                   \psi_n, \quad \text{if $s\in(s(\psi_n), s(\psi_n-)]$}.
                   \end{cases}
\end{align}

In other words, using vertical line segments to connect the 'gaps' (location of jump discontinuity) of the graph of $s(t)$, then viewing the graph from left to right, we get the graph of $t(s):[0,V]\to \mathbb{R}$. From the construction, we know $t(0)=\infty$ and $t(V)=-\infty$. We claim the following

\begin{lem}
With notations as above, $t(s)$ is locally Lipschitz. Hence, it is absolutely continuous. \label{t-ac}
\end{lem}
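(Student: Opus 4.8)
\emph{Strategy.} The plan is to deduce the local Lipschitz property of the inverse $t(s)$ from a single quantitative fact about $s(t)$: on every finite range of $t$, the function $s$ decreases at a definite minimal rate. Precisely, I will establish that for each interval $[t_-,t_+]\subset\mathbb R$ there is a constant $c=c(t_-,t_+)>0$ with
\begin{equation} \notag
s(t_1)-s(t_2)\ \geq\ c\,(t_2-t_1),\qquad t_-\leq t_1<t_2\leq t_+.
\end{equation}
This lower bound on the decay of $s$ is the reciprocal of an upper Lipschitz bound for the generalized inverse, so once it is in hand the lemma follows by unwinding the definition of $t(s)$. The virtue of this formulation is that it is a one-sided inequality about $s$ as a function of $t$, and therefore does not require $s$ to be continuous or absolutely continuous in $t$.

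\emph{The key estimate.} I would first observe that $\{t_-\leq u\leq t_+\}$ is a compact subset of $\mathbb C$ bounded away from $\infty$ and from the conic points $z_2,\dots,z_n$, since $u\to-\infty$ near $z_1=\infty$ and $u\to+\infty$ near each $z_i$ ($i\geq2$). As $u$ is $C^2$ there, $|\nabla u|\leq M$ for some finite $M$. Fixing $t_-\leq t_1<t_2\leq t_+$ and applying the co-area formula to $u$ on the annulus $W=\{t_1<u<t_2\}$ gives
\begin{equation} \notag
\int_{W}e^{2u}\,|\nabla u|\,dx=\int_{t_1}^{t_2}\Big(\int_{\{u=\tau\}}e^{2u}\,d\mathcal H^1\Big)\,d\tau=\int_{t_1}^{t_2}e^{2\tau}\,L_0(\tau)\,d\tau,
\end{equation}
where $L_0(\tau):=\mathcal H^1(\{u=\tau\})$. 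The left side is at most $M\int_W e^{2u}\leq M\,(s(t_1)-s(t_2))$. For the right side, for a.e.\ $\tau$ the level set bounds $\Omega(\tau)\supseteq\Omega(t_+)$, so the planar isoperimetric inequality yields $L_0(\tau)\geq 2\sqrt{\pi B(\tau)}\geq 2\sqrt{\pi B(t_+)}=:\ell>0$ (here $B(t_+)>0$ because $t_+$ lies below $\sup u$). Hence the right side is at least $e^{2t_-}\ell\,(t_2-t_1)$, and the estimate holds with $c=e^{2t_-}\ell/M$. Note the critical set $C$ contributes nothing to the left side, so the argument is insensitive to $|C|>0$.

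\emph{Transfer to $t(s)$.} Since $s$ is decreasing and right-continuous, the inverse is $t(s)=\inf\{t: s(t)<s\}$; in particular $s(t)\geq s$ whenever $t<t(s)$, and for every $\eps>0$ there is $t\in[t(s),t(s)+\eps)$ with $s(t)<s$. Given a compact $[s_-,s_+]\subset(0,V)$, the values $t(s)$ lie in a finite interval $[t_-,t_+]$ on which the key estimate holds. For $s_-\leq s_1<s_2\leq s_+$ I would pick $t'<t(s_1)$ with $s(t')\geq s_1$ and $t''\in[t(s_2),t(s_2)+\eps)$ with $s(t'')<s_2$; for $\eps$ and $t(s_1)-t'$ small one has $t''<t'$, so the key estimate gives $c\,(t'-t'')\leq s(t'')-s(t')<s_2-s_1$. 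Letting $t'\uparrow t(s_1)$ and $t''\downarrow t(s_2)$ yields $t(s_1)-t(s_2)\leq \tfrac1c\,(s_2-s_1)$, i.e. $t$ is Lipschitz on $[s_-,s_+]$; hence it is locally Lipschitz and absolutely continuous on $(0,V)$.

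\emph{Main difficulty.} The essential obstacle is exactly the one flagged before the lemma: once $|C|>0$ the functions $B$ and $s$ lose absolute continuity, and flat pieces of $u$ on positive-measure sets produce genuine jumps of $s(t)$. The estimate above sidesteps this because it is proved through the co-area formula and the jumps are absorbed cleanly by the generalized-inverse characterization. The remaining point requiring care is that the constant $c=e^{2t_-}\ell/M$ degenerates as $t_\pm\to\pm\infty$: the factor $e^{2t_-}$ collapses and the relevant level sets escape into the singular points where $|\nabla u|\to\infty$. This is why the conclusion is local rather than global, and indeed $t(s)$ genuinely blows up logarithmically as $s\to0$ and $s\to V$.
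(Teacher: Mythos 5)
Your proof follows in essence the same route as the paper's: the paper also bounds $s(t_1)-s(t_2-)$ from below by the co-area formula (Lemma 2.3 of [BZ], which splits off the critical-set contribution), handles the jumps of $s$ via the chain $s(t_2)\leq s_2\leq s(t_2-)<s(t_1)\leq s_1\leq s(t_1-)$, and then extracts a factor $t_2-t_1$ by the mean value theorem. Your variant --- putting the weight $e^{2u}$ rather than $e^{2u}/|\nabla u|$ on the level sets, and explicitly supplying the two quantitative ingredients, namely $\sup|\nabla u|\leq M$ on the compact slab $\{t_-\leq u\leq t_+\}$ and the isoperimetric lower bound on level-set length --- in fact fills in a point the paper leaves implicit: why the quantity $\int_{u=\tau}e^{2u}/|\nabla u|\,d\mathcal{H}^1$ multiplying $t_2-t_1$ is locally uniformly positive. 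Your transfer to the generalized inverse $t(s)=\inf\{t:\,s(t)<s\}$ is also correct.

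One step does fail in an edge case, and it is an edge case this lemma exists to cover. You justify $\ell=2\sqrt{\pi B(t_+)}>0$ by asserting that $t_+=t(s_-)$ lies strictly below $\sup u$. That is automatic when $n\geq 2$ (then $u\to+\infty$ at $z_2,\dots,z_n$, so $\sup u=\infty$), but the paper explicitly allows $n=1$ ($\alpha=0$), where $u$ is bounded above and attains its maximum; if moreover $u$ attains that maximum on a set of positive measure (a plateau, which forces $K=0$ there and hence can occur exactly when $a\leq 0\leq b$ --- the very situation motivating this section), then $t(s_-)=\sup u$ for all small $s_->0$, so $B(t_+)=0$ and your constant $c$ degenerates to $0$. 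The repair is immediate: for every $\tau<t_+$ one has $\Omega(\tau)\supseteq\{u\geq t_+\}$, and $|\{u\geq t_+\}|>0$ whenever $s_->0$, because $s(t)\geq s_-$ for all $t<t_+$ gives $\int_{\{u\geq t_+\}}e^{2u}=\lim_{t\uparrow t_+}s(t)\geq s_->0$; so take $\ell=2\sqrt{\pi\,|\{u\geq t_+\}|}$ in place of $2\sqrt{\pi B(t_+)}$, and the rest of your argument goes through verbatim. (The same oversight surfaces in your closing remark that $t(s)$ blows up as $s\to0$: this is false when $n=1$.)
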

\begin{proof}
It follows from the definition that $t(s)$ is continuous and monotone non-increasing. Indeed, for $s_1>s_2$, we define $t_1:=t(s_1)\leq t_2:=t(s_2)$. If $t_{1}=t_{2}$, the claim holds trivially. Otherwise,  we have
$$s(t_{2})\leq s_{2}\leq s(t_{2}-)<s(t_{1})\leq s_{1}\leq s(t_{1}-)$$

Now by the co-area formula (see Lemma 2.3 in ~\cite{BZ}), we have
\begin{align}\label{e4}
s(t_1)-s(t_2-)=\int_{C\cap u^{-1}((t_1,t_2))} e^{2u} +\int_{t_1}^{t_2} \int_{u=\tau} \frac{e^{2u}}{|\nabla u|} d\mathcal{H}^1 d\tau:=I_s+II_s.
\end{align}
Hence
\[
s_1-s_2\geq s(t_{1})-s(t_{2}-)\geq\int_{t_1}^{t_2} \int_{u=\tau} \frac{e^{2u}}{|\nabla u|} d\mathcal{H}^1 d\tau.
\]
Equivalently
\[
|t(s_1)-t(s_2)|\leq (\int_{u=\tau} \frac{e^{2u}}{|\nabla u|} d\mathcal{H}^1)^{-1} |s_1-s_2|,
\] for some $\tau\in(t_1,t_2)$ by the mean value theorem. It then follows that $t(s)$ is locally Lipschitz.
\end{proof}

We now consider both quantities $A$ and $B$ as functions of $s$. For $A$, we simply take the composition as $\mathbb{A}(s):=A(t(s))$. It follows from the definition that $\mathbb{A}(0)=0$ and $\mathbb{A}(V)=\chi=2\pi(2+|D|)$.

\begin{lem}\label{add4}
With notations as above, $\mathbb{A}(s)$ is continuous on $[0,V]$.
\end{lem}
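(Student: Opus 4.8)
The plan is to prove that $A$, regarded as a function of $t$, is \emph{already} continuous on all of $\mathbb{R}$ (with limiting values $A(+\infty)=0$ and $A(-\infty)=2\pi(2+|D|)$), and then to obtain the continuity of $\mathbb{A}(s)=A(t(s))$ for free by composing with the continuous map $t(s)$ furnished by Lemma~\ref{t-ac}. Indeed, on each bridged interval $s\in(s(\psi_n),s(\psi_n-)]$ the map $t(s)\equiv\psi_n$ is constant, so $\mathbb{A}$ is constant there; at the endpoints of these intervals, and at every $s\notin\mathcal{S}$, continuity of $\mathbb{A}$ will follow from continuity of $A$ in $t$ together with the fact that the one-sided limits of $t(s)$ at the endpoints of each such interval both equal $\psi_n$. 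Thus the whole statement reduces to the continuity of $A(t)$.

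Two ingredients give the continuity of $A(t)=\int_{\{u>t\}}Ke^{2u}$. Right-continuity is immediate: since $a\le K\le b$ we have $|K|e^{2u}\le\max(|a|,|b|)\,e^{2u}$, which is integrable because $\int_{\mathbb{C}}e^{2u}=V<\infty$, so monotone convergence applied to the shrinking family $\{u>t'\}\downarrow\{u>t\}$ as $t'\downarrow t$ yields $A(t')\to A(t)$. For left-continuity, the only possible jump at a level $t$ is $A(t-)-A(t)=\int_{\{u=t\}}Ke^{2u}$, so the heart of the matter is to prove that this integral vanishes for every $t$.

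To evaluate $\int_{\{u=t\}}Ke^{2u}$ I would split the level set as $\{u=t\}=\mathcal{N}(0)_t\cup(\{u=t\}\cap C)$, according to whether $\nabla u\ne 0$ or $\nabla u=0$. The first piece has $|\mathcal{N}(0)_t|=0$ by Claim~\ref{claim1}, so it contributes nothing. On the second piece one uses the relation $Ke^{2u}=-\Delta u$ from (\ref{GaussE}), valid because any finite level set lies in the smooth region where $u$ is $C^2$ (near $z_i$, $i\ge 2$, one has $u\to+\infty$ and near $\infty$ one has $u\to-\infty$), together with the elementary real-analysis fact that the full Hessian of a $C^2$ function vanishes almost everywhere on the set where its gradient vanishes. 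Concretely, applying the classical statement ``$\nabla f=0$ a.e. on $\{f=\mathrm{const}\}$'' to each component $f=\partial_i u$ (so that $C\subset\{\partial_i u=0\}$) gives $\nabla^2 u=0$, hence $\Delta u=0$, almost everywhere on $C$. Therefore $Ke^{2u}=0$ a.e. on $\{u=t\}\cap C$, the second piece contributes nothing, and $\int_{\{u=t\}}Ke^{2u}=0$, which forces $A(t-)=A(t)$.

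The main obstacle is precisely this vanishing of the Laplacian on the critical part of the level set: it is what allows $A$ to remain continuous across the ``gaps'' of $s(t)$ that were bridged by vertical segments in the definition of $t(s)$, and it is exactly the place where the positive-measure level sets --- the source of the loss of absolute continuity noted earlier --- are controlled. Once $A(t)$ is known to be continuous in $t$, continuity of $\mathbb{A}(s)=A(t(s))$ on $[0,V]$, together with the boundary values $\mathbb{A}(0)=0$ and $\mathbb{A}(V)=2\pi(2+|D|)$, follows directly from the composition argument above.
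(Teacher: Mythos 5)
Correct, and essentially the paper's own argument: you reduce to continuity of $A(t)$ in $t$ (then compose with the continuous $t(s)$), note that right-continuity is immediate, identify the possible jump at $t$ as $\int_{\{u=t\}}Ke^{2u}$, and kill it by splitting the level set into the non-critical part (Lebesgue-null by Claim~\ref{claim1}) and the critical part, where the Hessian---hence $\Delta u$, hence $K$---vanishes almost everywhere. The only variation is in how that last a.e.\ statement is justified: the paper invokes $|\mathcal{N}(1)|=0$ from Claim~\ref{claim1} (proved via the implicit function theorem), whereas you re-derive the same fact from the classical result that the gradient of a $C^1$ function vanishes a.e.\ on each of its level sets, applied to the components $\partial_i u$ on $C\subset\{\partial_i u=0\}$.
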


\begin{proof}
It suffices to prove that $A(t)$ is continuous. From the definition, it is clear that $A(t)$ is continuous from right, and
\[
\lim_{t\to t_0-} A(t)=\int_{\Omega_{t_0}} Ke^{2u}+\int_{u=t_0} Ke^{2u}.
\]

Thus $A(t)$ is continuous at $t_{0}$ provided $\int_{u=t_0} Ke^{2u}=0$. It then suffices to consider those $\psi\in\mathcal T$ for which $|\{u=\psi\}|>0$.

It follows from the definition that
\begin{align} \label{e1}
K(z)\equiv 0, \quad \forall z\in \{u=\psi \}\setminus (\mathcal{N}(0)_{\psi}\cup\mathcal{N}(1)).
\end{align}
Combining (\ref{e1}) and Claim~\ref{claim1}, we get $\int_{u=\psi} Ke^{2u}=0$, which finishes the proof.  \end{proof}

Furthermore, we prove the following
\begin{lem}
$\mathbb{A}(s)$ is Lipschitz. Furthermore, we have
\begin{align}\label{add31}
a\leq \mathbb{A}'(s)\leq b, \quad a.e. s\in[0,V].
\end{align}
\end{lem}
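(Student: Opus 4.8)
The plan is to reduce everything to the two-sided increment estimate
\[
a(s_1-s_2)\le \mathbb{A}(s_1)-\mathbb{A}(s_2)\le b(s_1-s_2)\qquad\text{for all }0\le s_2<s_1\le V .
\]
Granting this, the conclusion is immediate: since Gauss--Bonnet gives $\int_{\mathbb{C}}Ke^{2u}=2\pi(2+|D|)>0$ we have $b>0$, so the estimate yields $|\mathbb{A}(s_1)-\mathbb{A}(s_2)|\le \max(|a|,b)\,|s_1-s_2|$, i.e.\ $\mathbb{A}$ is Lipschitz; and at every point of differentiability (a.e.) the right difference quotient lies in $[a,b]$, which is exactly (\ref{add31}). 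So the whole task is to prove the displayed increment bound.

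Fix $s_1>s_2$ and put $t_1:=t(s_1)\le t_2:=t(s_2)$, using that $t(s)$ is non-increasing (Lemma~\ref{t-ac}). Since $\mathbb{A}(s_i)=A(t_i)$ by definition, set algebra gives
\[
\mathbb{A}(s_1)-\mathbb{A}(s_2)=A(t_1)-A(t_2)=\int_{\{t_1<u\le t_2\}}Ke^{2u}.
\]
The boundary level contributes nothing: if $|\{u=t_2\}|>0$ then $t_2\in\mathcal T$ and $K=0$ a.e.\ on $\{u=t_2\}$ by (\ref{e1}) together with Claim~\ref{claim1}, while if $|\{u=t_2\}|=0$ the term vanishes trivially. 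Hence the integral is over the open slab $\{t_1<u<t_2\}$, and applying the co-area decomposition (Lemma 2.3 of~\cite{BZ}) exactly as in (\ref{e4}) but with integrand $Ke^{2u}$,
\[
\mathbb{A}(s_1)-\mathbb{A}(s_2)=\int_{C\cap u^{-1}((t_1,t_2))}Ke^{2u}+\int_{t_1}^{t_2}\int_{u=\tau}\frac{Ke^{2u}}{|\nabla u|}\,d\mathcal{H}^1\,d\tau .
\]

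The decisive point is that the critical set contributes nothing to $A$, in sharp contrast to its positive contribution to $s$. Indeed $\partial_x u,\partial_y u$ are $C^1$ and vanish on $C=\{\nabla u=0\}$, so their gradients vanish a.e.\ on $C$; therefore $\Delta u=0$ and $Ke^{2u}=-\Delta u=0$ a.e.\ on $C$ (this is also visible from the $\mathcal N(d)$-stratification of Claim~\ref{claim1}), killing the first term. Writing $II_s:=\int_{t_1}^{t_2}\int_{u=\tau}\frac{e^{2u}}{|\nabla u|}\,d\mathcal{H}^1\,d\tau\ge 0$ for the smooth part of the co-area expansion of $s$ in (\ref{e4}), and using $a\le K\le b$ pointwise, we get $a\,II_s\le \mathbb{A}(s_1)-\mathbb{A}(s_2)\le b\,II_s$. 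It then remains to trade $II_s$ for $s_1-s_2$. From (\ref{e4}), $II_s\le s(t_1)-s(t_2-)$, and the construction of $t(s)$ gives $s(t_1)\le s_1$ and $s(t_2-)\ge s_2$, so $0\le II_s\le s_1-s_2$. The upper bound follows at once, $\mathbb{A}(s_1)-\mathbb{A}(s_2)\le b\,II_s\le b(s_1-s_2)$ since $b>0$. For the lower bound I split on the sign of $a$: if $a\le 0$, then $a\,II_s\ge a(s_1-s_2)$ because $II_s\le s_1-s_2$; if $a>0$, the Remark following the definition of $\mathcal T$ gives $|C|=0$ and $\mathcal T=\emptyset$, so the critical term in (\ref{e4}) vanishes and $s$ is continuous and strictly decreasing, forcing $II_s=s(t_1)-s(t_2-)=s_1-s_2$ and hence $\mathbb{A}(s_1)-\mathbb{A}(s_2)\ge a(s_1-s_2)$.

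I expect the main obstacle to be precisely this asymmetric treatment of the critical set $C$: because $Ke^{2u}=-\Delta u$ vanishes a.e.\ on $C$ while $e^{2u}$ does not, $C$ enters the co-area expansion of $s$ but not of $A$, so the naive chain-rule identity $\mathbb{A}'=A'/s'$ fails and one must instead bound the genuine-slab integral $II_s$ against $s_1-s_2$ and then split on the sign of $a$ to close the lower inequality. Minor care at the endpoints $s=0,V$ (where $t=\pm\infty$) is handled by a routine limiting argument, using $\mathbb{A}(0)=0$, $\mathbb{A}(V)=2\pi(2+|D|)$ and the continuity already established in Lemma~\ref{add4}.
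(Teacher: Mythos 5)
Your proof is correct, and while it shares the paper's overall skeleton (reduce to the two--sided increment bound $a(s_1-s_2)\le \mathbb{A}(s_1)-\mathbb{A}(s_2)\le b(s_1-s_2)$, using (\ref{e1}), Claim~\ref{claim1} and the co-area identity (\ref{e4})), the key mechanism is genuinely different. The paper never applies the co-area formula to the curvature integral: it records the increment of $\mathbb{A}$ simultaneously as an integral over the \emph{open} slab $\{t_1<u<t_2\}$ and over the \emph{closed} slab $\{t_1\le u\le t_2\}$ (its identity (\ref{e7})), sandwiches $s_1-s_2$ between the corresponding integrals of $e^{2u}$ (its (\ref{add32})), and then the sign dichotomy is absorbed by choosing which representation to multiply through: the open slab gives the upper bound (and the lower bound when $a\le 0$), while the closed slab gives the lower bound when $a>0$, with no appeal to $\mathcal{T}=\emptyset$. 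You instead decompose $\int_{\{t_1<u<t_2\}}Ke^{2u}$ by co-area and kill the critical-set term outright, using the fact that the derivatives of a $C^1$ function vanish a.e.\ on its zero set, so $Ke^{2u}=-\Delta u=0$ a.e.\ on $C$; this is a clean strengthening of what the paper states only in level-set form in (\ref{e1}), and it buys you a single two-sided estimate $a\,II_s\le \mathbb{A}(s_1)-\mathbb{A}(s_2)\le b\,II_s$ with $0\le II_s\le s_1-s_2$. The cost is that your lower bound for $a>0$ needs the extra input that $|C|=0$ and $\mathcal{T}=\emptyset$ (Lemma~\ref{add2} and the Remark following the definition of $\mathcal{T}$), which the paper's closed-slab representation handles for free.

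One small point to patch: your chain $II_s\le s(t_1)-s(t_2-)\le s_1-s_2$ tacitly assumes $t(s_1)<t(s_2)$. When $t(s_1)=t(s_2)$, i.e.\ both values lie in a single jump interval of $s$, the identity (\ref{e4}) does not apply and $s(t_1)-s(t_2-)$ is negative. The paper disposes of this degenerate case as the explicit first step of its proof; in your scheme it is equally immediate, since then $\mathbb{A}(s_1)=\mathbb{A}(s_2)$ and $II_s=0$, while $\mathcal{T}\neq\emptyset$ forces $a\le 0$ by Lemma~\ref{add2}, so $a(s_1-s_2)\le 0\le b(s_1-s_2)$ holds; but this case should be stated rather than left implicit.
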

\begin{proof}
We proceed as in the proof of Lemma~\ref{t-ac}.  For $s_1>s_2$, we define $t_1:=t(s_1)\leq t_2:=t(s_2)$. If $t_{1}=t_{2}$, then $ t_{1}\in\mathcal T\neq \emptyset$, which by Lemma~\ref{add2} implies that $a<0$, thus (\ref{add31}) holds.

Otherwise, we have
\[s(t_{2})\leq s_{2}\leq s(t_{2}-)<s(t_{1})\leq s_{1}\leq s(t_{1}-).
\]By Lemma~\ref{add4} and (\ref{e1}),  we have
\begin{align} \label{e7}
{\mathbb{A}(s_1)-\mathbb{A}(s_2)}={\int_{\{t_2<u< t_1\}} Ke^{2u}}={\int_{\{t_2\leq u\leq t_1\}} Ke^{2u}}.
\end{align}
On the other hand, we have
\begin{align}\label{add32}
 \int_{t_{1}< u< t_{2}}e^{2u}\leq s(t_{1})-s(t_{2}-)\leq s_{1}-s_{2}\leq s(t_{1}-)-s(t_{2})=\int_{t_{1}\leq u\leq t_{2}}e^{2u}.
\end{align}
The conclusion then follows from  (\ref{e7}), (\ref{add32}) and the fact that $a\leq K\leq b$.
\end{proof}

Finally, we discuss the function $B(t)$. Define
\begin{align}\notag
\mathbb{B}(s):=\begin{cases} B(t(s)), \quad \text{ if $s\notin \mathcal{S}$};\\
                              \frac{(s-s(\psi_n))B(\psi_n-)+(s(\psi_n-)-s)B(\psi_n)}{s(\psi_n-)-s(\psi_n)}, \quad \text{if $s\in(s(\psi_n), s(\psi_n-)]$}.
                \end{cases}
\end{align}

It is clear that $\mathbb B$ is a monotone increasing function. Hence ${\mathbb B }'(s)$ exists almost everywhere.
We prove the following
\begin{lem}\label{add41}
For any $s_{1}>s_{2}$, we have
\begin{equation}\label{add45}
e^{-2t(s_{1})}\leq \frac{\mathbb{B}(s_1)-\mathbb{B}(s_2)}{s_1-s_2}\leq e^{-2t(s_{2})}.
\end{equation}
\end{lem}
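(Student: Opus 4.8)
The plan is to prove the two-sided estimate \eqref{add45} by analyzing the increment $\mathbb{B}(s_1)-\mathbb{B}(s_2)$ through the same co-area machinery used in Lemma~\ref{t-ac}, but now tracking the Euclidean-area functional $B$ against the conformal-area functional $s$. The key observation is that on the level set $\{u=\tau\}$, the densities for the two area functionals differ exactly by the factor $e^{2u}=e^{2\tau}$: the co-area formula gives $dB$ with integrand $|\nabla u|^{-1}$ and $ds$ with integrand $e^{2u}|\nabla u|^{-1}$. Since $u=\tau$ on that slice, pointwise we have $e^{2u}=e^{2\tau}$, so the ratio of the two densities is the constant $e^{2\tau}$ on each slice.

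First I would reduce to the case $s_1,s_2\notin\mathcal{S}$ (on a jump interval $\mathbb{B}$ is the linear interpolant and $t(s)$ is constant, so \eqref{add45} is immediate from the definitions with equality on both sides), and set $t_1=t(s_1)\le t_2=t(s_2)$. Applying the co-area formula to both $B$ and $s$ over the region $\{t_1<u<t_2\}$, I would write
\begin{align}\notag
s_1-s_2 &\geq \int_{t_1}^{t_2}\int_{u=\tau}\frac{e^{2u}}{|\nabla u|}\,d\mathcal{H}^1\,d\tau = \int_{t_1}^{t_2} e^{2\tau}\Big(\int_{u=\tau}\frac{1}{|\nabla u|}\,d\mathcal{H}^1\Big)\,d\tau,\\ \notag
\mathbb{B}(s_1)-\mathbb{B}(s_2) &= \int_{t_1}^{t_2}\int_{u=\tau}\frac{1}{|\nabla u|}\,d\mathcal{H}^1\,d\tau.
\end{align}
Because $t_1\le\tau\le t_2$ on the region of integration, the weight $e^{2\tau}$ is squeezed between $e^{2t_1}$ and $e^{2t_2}$. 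Multiplying the $B$-integrand by this weight and comparing with the $s$-integrand yields the two inequalities $e^{2t_1}(\mathbb{B}(s_1)-\mathbb{B}(s_2))\geq s_1-s_2$ and $e^{2t_2}(\mathbb{B}(s_1)-\mathbb{B}(s_2))\leq s_1-s_2$, which rearrange precisely to \eqref{add45} after recalling $t_1=t(s_1)$ and $t_2=t(s_2)$.

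The main obstacle is the bookkeeping of the critical-set contributions $I_s$ in the co-area decomposition \eqref{e4}, together with the possibly strict inequalities $s(t_2)\le s_2\le s(t_2-)$ coming from jump discontinuities at the endpoints. I would handle this exactly as in Lemma~\ref{t-ac}: the critical-set term contributes to $s$ but the corresponding Euclidean-area contribution appears in $\mathbb{B}$ through the interpolation formula, and Claim~\ref{claim1} guarantees that the relevant slices $\mathcal{N}(0)_\tau$ carry no $\mathcal{H}^1$-excess, so no spurious terms arise in the density comparison. The delicate point is to verify that the endpoint slack — where $s_i$ may fall strictly inside a jump interval closure — preserves the inequalities rather than reversing them; this is controlled by the fact that $\mathbb{B}$ is defined as the linear interpolant with the same slope bound on jump intervals, so the monotone weighting by $e^{2\tau}$ remains valid across the closure. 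Once the density comparison is established on the continuity set, the two-sided bound \eqref{add45} follows by the squeeze on $e^{2\tau}$ and passes to the general case by the linear-interpolation construction of $\mathbb{B}$.
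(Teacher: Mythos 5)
Your geometric idea---that the ratio of Euclidean to conformal area of the slab $\{t_1<u<t_2\}$ is squeezed by the pointwise bound $e^{2t_1}\le e^{2u}\le e^{2t_2}$---is the same as the paper's, but your co-area implementation breaks down at exactly the point this section of the paper was designed to handle. Your identity $\mathbb{B}(s_1)-\mathbb{B}(s_2)=\int_{t_1}^{t_2}\int_{u=\tau}|\nabla u|^{-1}\,d\mathcal{H}^1\,d\tau$ is false in general: by the co-area formula (Lemma 2.3 of \cite{BZ}, exactly as in \eqref{e4}), the correct decomposition is $B(t_1)-B(t_2-)=|C\cap u^{-1}((t_1,t_2))|+\int_{t_1}^{t_2}\int_{u=\tau}|\nabla u|^{-1}\,d\mathcal{H}^1\,d\tau$, and since $a\le 0$ is allowed, the curvature may vanish on a set of positive measure, so $|C|>0$ is possible (Lemma~\ref{add2} rules this out only when $\Delta u\neq 0$). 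Claim~\ref{claim1} cannot rescue you here: it controls the planar Lebesgue measure of $\mathcal{N}(0)_t$ and $\mathcal{N}(d)$, not the area contributed by the critical set to the slab. This omitted term is precisely the ``$I_s$-type'' contribution, and for a two-sided bound it cannot be dropped.

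Moreover, the two inequalities you claim do not follow from your own displays, and one of them points the wrong way. From $\mathbb{B}(s_1)-\mathbb{B}(s_2)=\int_{t_1}^{t_2}W(\tau)\,d\tau$ (with $W(\tau)=\int_{u=\tau}|\nabla u|^{-1}d\mathcal{H}^1$) and $s_1-s_2\ge\int_{t_1}^{t_2}e^{2\tau}W(\tau)\,d\tau$, the squeeze on $e^{2\tau}$ yields only $s_1-s_2\ge e^{2t_1}\bigl(\mathbb{B}(s_1)-\mathbb{B}(s_2)\bigr)$; because you discarded the critical term of the $s$-decomposition as a one-sided inequality, you have no \emph{upper} bound on $s_1-s_2$, so no bound involving $e^{2t_2}$ can be extracted, and your first claimed inequality $e^{2t_1}(\mathbb{B}(s_1)-\mathbb{B}(s_2))\ge s_1-s_2$ is the reverse of what your displays imply. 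A sanity check exposes the tangle: $t(\cdot)$ is non-increasing, so $e^{-2t(s_1)}\ge e^{-2t(s_2)}$, and the chain \eqref{add45} as printed can only hold when $t(s_1)=t(s_2)$; what is actually true---and what the paper's proof establishes---is the transposed chain $e^{-2t(s_2)}\le\frac{\mathbb{B}(s_1)-\mathbb{B}(s_2)}{s_1-s_2}\le e^{-2t(s_1)}$ (the printed statement is a typo), so claiming to derive the printed version verbatim should have been a red flag. The repair is the paper's route: do not slice at all, but write $\mathbb{B}(s(t_1))-\mathbb{B}(s(t_2-))=|\{t_1<u<t_2\}|$ and $s(t_1)-s(t_2-)=\int_{\{t_1<u<t_2\}}e^{2u}$, then compare the integrands pointwise on the slab, which treats critical and regular points uniformly; alternatively, keep the co-area decomposition but carry \emph{both} critical terms $I_B=|C\cap u^{-1}((t_1,t_2))|$ and $I_s=\int_{C\cap u^{-1}((t_1,t_2))}e^{2u}$, noting $e^{2t_1}I_B\le I_s\le e^{2t_2}I_B$ for the same pointwise reason. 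Your treatment of the jump intervals (exact slope $e^{-2\psi}$ there, then interpolation) does match the paper's and is fine.
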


\begin{proof}  Again, for $s_1>s_2$, we define $t_1:=t(s_1)\leq t_2:=t(s_2)$. If $t_{1}=t_{2}$, then $ t_{1}=\psi \in\mathcal T$, and $\mathbb B$ is defined linearly on $(s(t_{1}),s(t_{1}-))$. Thus, we have
\begin{equation}{\mathbb B}'(s)=  {\frac{B(\psi-)-B(\psi)}{s(\psi-)-s(\psi)}}={\frac{{\int_{u=\psi}1}}
{\int_{u=\psi}e^{2u}}}=e^{-2t_1},\label{44}
\end{equation}
which proves the statement.

Otherwise, we assume that
$$s(t_{2})\leq s_{2}\leq s(t_{2}-)<s(t_{1})\leq s_{1}\leq s(t_{1}-).$$
We then have
\[
\frac{\mathbb{B}(s(t_{1}))-\mathbb{B}(s(t_{2}-))}{s(t_{1})-s(t_{2}-)}=\frac{\int_{t_{1}< u< t_{2}}1}{\int_{t_{1}< u< t_{2}}e^{2u}}.
\]
Notice that similar to (\ref{44}), if $s_{2}< s(t_{2}-)$ or $s_{1}> s(t_{1})$, we have (\ref{add45}) in  $(s_{2},s(t_{2}-))$ and $(s_1, s(t_{1}-))$, respectively. A simple interpolation gives us the result.
\end{proof}

\begin{cor}
$\mathbb{B}(s)$ is locally Lipschitz. Furthermore, we have
\begin{align}\notag
\mathbb{B}'(s)=e^{-2t(s)}, \quad \forall s\in (0,V).
\end{align}
\end{cor}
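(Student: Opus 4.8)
The plan is to establish the two assertions of the corollary—local Lipschitz continuity of $\mathbb{B}(s)$ and the pointwise formula $\mathbb{B}'(s)=e^{-2t(s)}$—directly from the two-sided difference quotient bound already provided by Lemma~\ref{add41}. The entire statement is essentially a packaging of that lemma, so I would keep the argument short and let Lemma~\ref{add41} and the established properties of $t(s)$ carry the load.

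First I would prove the local Lipschitz property. Fix a compact subinterval $[s_0,s_1]\subset(0,V)$. Since $t(s)$ is continuous and monotone non-increasing on $[0,V]$ by Lemma~\ref{t-ac}, it is bounded on $[s_0,s_1]$: there exist finite constants $m\le t(s)\le M$ for all $s$ in this subinterval. Then for any $s_2'<s_1'$ in $[s_0,s_1]$, the inequality \eqref{add45} gives
\[
e^{-2M}\le e^{-2t(s_1')}\le \frac{\mathbb{B}(s_1')-\mathbb{B}(s_2')}{s_1'-s_2'}\le e^{-2t(s_2')}\le e^{-2m},
\]
so the difference quotient is bounded above in absolute value by $e^{-2m}$ on $[s_0,s_1]$. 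This is exactly local Lipschitz continuity, and hence $\mathbb{B}$ is absolutely continuous and differentiable almost everywhere.

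Next I would identify the derivative. Let $s\in(0,V)$ be a point of differentiability of $\mathbb{B}$. Letting $s_1\to s^+$ and $s_2\to s^-$ in \eqref{add45} and using the continuity of $t(s)$ (again Lemma~\ref{t-ac}), both the lower bound $e^{-2t(s_1)}$ and the upper bound $e^{-2t(s_2)}$ converge to $e^{-2t(s)}$. The squeeze then forces $\mathbb{B}'(s)=e^{-2t(s)}$ wherever the derivative exists; but since the right-hand side $e^{-2t(s)}$ is itself continuous in $s$ and the two-sided difference quotient is trapped between quantities both tending to it, the one-sided derivatives agree and the derivative in fact exists at every $s\in(0,V)$, giving the stated identity for all $s$ rather than merely almost everywhere.

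The only delicate point—and the step I expect to require the most care—is the passage from ``almost everywhere'' to ``everywhere'' in the formula $\mathbb{B}'(s)=e^{-2t(s)}$. The Lipschitz estimate alone only yields differentiability a.e., so the claim that the derivative exists at \emph{every} interior point relies genuinely on the two-sided sandwich in \eqref{add45} together with the continuity of $t$: because the difference quotient over $[s_2,s_1]$ is squeezed between $e^{-2t(s_1)}$ and $e^{-2t(s_2)}$, both of which converge to $e^{-2t(s)}$ as the interval shrinks to $s$, the full (not merely one-sided) limit of the difference quotient exists and equals $e^{-2t(s)}$. I would make sure to invoke the continuity of $t(s)$ explicitly at this point, since without it the upper and lower bounds in \eqref{add45} need not have a common limit, and I would note that the jump set $\mathcal{S}$ causes no trouble because $\mathbb{B}$ was defined linearly there precisely so that \eqref{44} holds and the interpolation in Lemma~\ref{add41} extends the bound across those intervals.
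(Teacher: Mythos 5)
Your proposal is correct and follows exactly the route the paper intends: the corollary is stated as an immediate consequence of Lemma~\ref{add41}, and your argument simply spells out how the two-sided bound (\ref{add45}), together with the continuity of $t(s)$ from Lemma~\ref{t-ac}, gives both the local Lipschitz bound and the squeeze identifying $\mathbb{B}'(s)=e^{-2t(s)}$ at every interior point. The care you take in upgrading from almost-everywhere differentiability to differentiability everywhere via the sandwich is precisely the content the paper leaves implicit.
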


We have thus established the regularity properties of $\mathbb{A}(s)$ and $\mathbb{B}(s)$ that are sufficient to the later estimate.

\subsection{Geometric inequality from iso-perimetric consideration}

Another direct consequence of the co-area formula is the following(c.f. Lemma 2.3 ~\cite{BZ})
\begin{align} \label{e6}
B'(t)\leq -\int_{u=t} \frac{1}{|\nabla u|} d\mathcal{H}^1, \quad \text{for a.e. $t\in \mathbb{R}$}.
\end{align}

By Sard's theorem, $\{u=t\}$ is a disjoint union of smooth closed curves, for $t$ almost everywhere. For such a $t$, the isoperimetric inequality and the H\"{o}lder's inequality yield that
\begin{equation}
4\pi B(t)\leq(\int_{\partial\Omega(t)}1)^{2}\leq\int_{\partial\Omega}|\nabla u|\int_{\partial\Omega(t)}\frac{1}{|\nabla u|}=\int_{\partial\Omega(t)}\frac{1}{|\nabla u|}(A(t)-2\pi\alpha).\label{51}
\end{equation}
Combining (\ref{e6}) and (\ref{51}), we have
\begin{align}
\label{E4} 4\pi B(t)\leq -B'(t) (A(t)-2\pi\alpha)=-{\mathbb B}' (s(t)) s'(t) (\mathbb{A}(s(t))-2\pi\alpha)\quad \text{for a.e. $t\in\mathbb{R}$}.
\end{align}

The key idea of the proof is the following estimate regarding the quantity $e^{2t(s)}\mathbb{B}(s)$, which by our previous analysis is absolutely continuous. Noticing that ${\mathbb B}'(s)=e^{-2t(s)}$, we thus get
\begin{align}
\frac{d}{ds}[e^{2t(s)}\mathbb{B}(s)]=&e^{2t(s)}2t'(s)\mathbb{B}(s)+e^{2t(s)}\mathbb{B}'(s)\notag\\
=& e^{2t(s)}2t'(s)\mathbb{B}(s)+1\notag\\
=& e^{2t(s)}2t'(s)\mathbb{B}(s)+1.\label{52}
\end{align}
Note that $t'(s)\leq 0$, we combine (\ref{E4}) and (\ref{52}) to get
\begin{align} \label{E10}
\frac{d}{ds}[e^{2t(s)}\mathbb{B}(s)]\geq1+\alpha-\frac{\mathbb{A}(s)}{2\pi}.
\end{align}
Note for $s\in \mathcal{S}$, we actually have $\mathbb{B}'(s)=t'(s)=0$, thus (\ref{E10}) holds trivially true.

Finally since
\[
\int_{\mathbb{R}^2} e^{2u}=2 \int_{-\infty}^{\infty} B(t)e^{2t} dt <\infty,
\] there exist sequences $t_n \to -\infty$ and $T_n\to \infty$, such that $e^{2t_n}B(t_n)\to 0$ and $e^{2T_n}B(T_n)\to 0$. Taking corresponding $s_n=s(t_n)$ and $S_n=s(T_n)$, we get by (\ref{E10}) that
\begin{align}\label{E6}
\int_{0}^{V}(1+\alpha-\frac{\mathbb{A}(s)}{2\pi})ds\leq \lim_{n\to\infty} \int_{S_n}^{s_n}\frac{d}{ds}[e^{2t(s)}\mathbb{B}(s)]ds=0.
\end{align}

\begin{rem} A slightly different version of (\ref{E10}) has first appeared in~\cite{FL2} where the curvature is positive. The absolute continuity is crucial to get (\ref{E6}) with weaker curvature bounds.
\end{rem}

\subsection{An extremal problem}Now we need to a technical lemma.
\begin{lem} \label{lemma}Let $f\in C^{1}([0,V])$ such that $f(0)=0$, $f(V)=\chi$, and $ a\leq f'\leq b$, then
\[
\int_0^{V} f(x)dx\leq \frac{-a}{2}V^2-\frac{(\chi-aV)^2}{2(b-a)}+V\chi.
\]
\end{lem}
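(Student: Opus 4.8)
The plan is to recast this as a constrained optimization for the derivative $h := f'$ and to locate the extremal $h$ by a rearrangement (bathtub) argument; I assume throughout that $a < b$, since $a = b$ forces $f$ to be affine and the stated inequality degenerates to an equality. First I would integrate by parts, which is legitimate as $f \in C^{1}$: using $f(0) = 0$ and $f(V) = \chi$,
\[
\int_0^V f(x)\,dx = \bigl[x f(x)\bigr]_0^V - \int_0^V x f'(x)\,dx = V\chi - \int_0^V x f'(x)\,dx .
\]
Hence the asserted bound is equivalent to the lower estimate
\[
\int_0^V x f'(x)\,dx \;\ge\; \frac{a}{2}V^2 + \frac{(\chi - aV)^2}{2(b-a)} .
\]

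Next I would normalize the constraint. Setting $g := f' - a$, the hypothesis $a \le f' \le b$ becomes $0 \le g \le b - a$, and $\int_0^V g = f(V) - f(0) - aV = \chi - aV =: m$, where admissibility forces $0 \le m \le (b-a)V$. Since $\int_0^V x f' = \tfrac{a}{2}V^2 + \int_0^V x g$, the problem reduces to minimizing the increasing-weight integral $\int_0^V x\,g\,dx$ over all admissible $g$, the target being $\int_0^V x\,g\,dx \ge \frac{m^2}{2(b-a)}$.

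The key step is a direct comparison with the extremal profile $g^* := (b-a)\,\mathbf{1}_{[0,c]}$, where $c := m/(b-a) \in [0,V]$. One checks immediately that $\int_0^V g^* = (b-a)c = m$ and $\int_0^V x\,g^*\,dx = (b-a)c^2/2 = m^2/\bigl(2(b-a)\bigr)$, the conjectured minimum. To see that $g^*$ is optimal I would use the decomposition
\[
\int_0^V x\,(g - g^*)\,dx = \int_0^V (x - c)(g - g^*)\,dx + c\int_0^V (g - g^*)\,dx ,
\]
in which the last term vanishes because $g$ and $g^*$ share the integral $m$. The remaining integrand is pointwise nonnegative: on $[0,c]$ we have $x - c \le 0$ and $g - g^* = g - (b-a) \le 0$, while on $[c,V]$ we have $x - c \ge 0$ and $g - g^* = g \ge 0$. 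Therefore $\int_0^V x\,g \ge \int_0^V x\,g^*$, and tracing the reductions backward yields the lemma.

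I do not anticipate a serious obstacle. The only points demanding care are the verification that $0 \le c \le V$ (which follows from the admissibility range $0 \le m \le (b-a)V$) and the arithmetic confirming that $g^*$ produces exactly the constant $m^2/\bigl(2(b-a)\bigr)$. The one genuinely conceptual move is recognizing the sign-matching decomposition above, which renders the rearrangement comparison rigorous without appealing to any measure-theoretic rearrangement machinery.
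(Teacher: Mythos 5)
Your proof is correct and follows essentially the same route as the paper's: integrate by parts and compare $f'$ against the bang-bang profile ($f'\equiv b$ on $[0,\delta]$, $f'\equiv a$ on $[\delta,V]$) at the same balance point $\delta = c = (\chi-aV)/(b-a)$. The only difference is cosmetic: the paper keeps the decreasing weight $V-x$ and compares the two balanced integrals $\int_0^\delta (b-f')\,dx=\int_\delta^V(f'-a)\,dx$ via a mean-value argument, whereas you normalize $g=f'-a$ and use the pointwise sign decomposition $(x-c)(g-g^*)\geq 0$ --- the same comparison in slightly different clothing.
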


\begin{proof}
Since $f(0)=0$, we have, due to integration by parts,
\[
 \int_{0}^{V}f(x) dx=\int_{0}^{V}f'(x)(V-x)dx.
\]

Let $\delta=\frac{\chi-aV}{b-a}$, then
\begin{align} \label{E5}
\int_{0}^{\delta}(b-f'(x))dx=\int_{\delta}^{V}(f'(x)-a)dx.
\end{align}
Notice that function $V-x$ is monotone decreasing and integrands of both sides of (\ref{E5}) are non-negative, we get by means of the mean value theorem that
\[
\int_{0}^{\delta}(b-f'(x))(V-x)dx\geq\int_{\delta}^{V}(f'(x)-a)(V-x)dx.
\]
A simple computation then leads to our conclusion.

It is obvious that when the equality  holds, $f$ has to be the following continuous function
\[g(x)=\begin{cases}
bx, & 0\leq s<\delta\\
a(x-V)+\chi, & \delta\leq s\leq V.
\end{cases}
\]
\end{proof}

\subsection{Proof of Theorem~\ref{addt1}}
Finally, we are ready to prove our main result in this section. Combining (\ref{E6}) and Lemma~\ref{lemma}, we get
\begin{align} \notag
2\pi(1+\alpha)V\leq -\frac{a}{2}V^2-\frac{(\chi-aV)^2}{2(b-a)}+V\chi,
\end{align}
where $\chi=2\pi(2+|D|)=2\pi(2+\alpha+\beta)$. Equivalently,
\begin{equation} \label{E13}
abV^{2}-4\pi(a(1+\alpha)+b(1+\beta))V+\chi^{2}\leq 0.
\end{equation}

Hence, when $a=0$, we get
\begin{equation} \notag
V\geq \frac{\pi (2+|D|)^{2}}{b(1+\beta)}.
\end{equation}
When $a\neq0$, we get
\begin{equation} \label{E7}
V\geq 2\pi[\frac{\beta+1}{a}+\frac{\alpha+1}{b}-\frac{\sqrt{(b-a)(b(\beta+1)^{2}-a(\alpha+1)^{2})}}{ab}].
\end{equation}
When $a>0$, we also get an upper bound for $V$:
\begin{align} \notag
V\leq 2\pi[\frac{\beta+1}{a}+\frac{\alpha+1}{b}+\frac{\sqrt{(b-a)(b(\beta+1)^{2}-a(\alpha+1)^{2})}}{ab}].
\end{align}
We have thus proven Theorem~\ref{addt1}.

When $a>0$, to ensure there exists at least one $V\in\mathbb{R}$ satisfies (\ref{E13}), we need the square root term in (\ref{E7}) is nonnegative. Thus we get the following necessary condition.

\begin{cor} \label{cor1}Assume the conditions of Theorem~\ref{addt1}. If $a>0$, we then have
 \[
\frac{a}{b}\leq \frac{(1+\beta)^2}{(1+\alpha)^2}.
\]
\end{cor}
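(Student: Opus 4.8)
The plan is to read the pinching inequality off directly from the quadratic inequality (\ref{E13}), which the actual volume $V$ must satisfy. Since $V$ is a genuine real (indeed positive) number, (\ref{E13}) asserts that the upward-opening parabola
\[
P(V):=abV^2-4\pi\bigl(a(1+\alpha)+b(1+\beta)\bigr)V+\chi^2
\]
attains a non-positive value; here the leading coefficient $ab$ is strictly positive because $a>0$. An upward-opening parabola dips to $\leq 0$ precisely when its discriminant is non-negative; equivalently, the two roots furnished by the quadratic formula in (\ref{E7}) must be real, which is exactly the requirement that the quantity under the square root there be non-negative. This is the mechanism already flagged in the remark preceding the corollary, and turning it into the stated bound is a purely algebraic task.

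First I would record that $\chi=2\pi(2+\alpha+\beta)=2\pi\bigl((1+\alpha)+(1+\beta)\bigr)$, so $\chi^2=4\pi^2\bigl((1+\alpha)+(1+\beta)\bigr)^2$. I would then compute the discriminant of $P$,
\[
\Delta=16\pi^2\Bigl[\bigl(a(1+\alpha)+b(1+\beta)\bigr)^2-ab\bigl((1+\alpha)+(1+\beta)\bigr)^2\Bigr],
\]
and simplify the bracket. Abbreviating $p=1+\alpha$ and $q=1+\beta$, expansion gives $(ap+bq)^2-ab(p+q)^2=a^2p^2+b^2q^2-abp^2-abq^2=(a-b)\bigl(ap^2-bq^2\bigr)$, hence
\[
\Delta=16\pi^2(a-b)\bigl(a(1+\alpha)^2-b(1+\beta)^2\bigr).
\]

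The condition $\Delta\geq 0$ forced by real solvability then finishes the argument. Since the curvature bound $a\leq K_g\leq b$ gives $a-b\leq 0$, the remaining factor must satisfy $a(1+\alpha)^2-b(1+\beta)^2\leq 0$, that is $a(1+\alpha)^2\leq b(1+\beta)^2$. Dividing by the strictly positive quantity $b(1+\alpha)^2$ — positive because $b>0$ and $1+\alpha>0$ as $\alpha>-1$ — yields $\frac{a}{b}\leq\frac{(1+\beta)^2}{(1+\alpha)^2}$, as claimed. There is no serious obstacle here: the content is entirely elementary, and the only point demanding a moment's care is the sign bookkeeping when factoring $\Delta$, where the hypothesis $a\leq b$ is exactly what flips $\Delta\geq 0$ into the stated direction of the inequality.
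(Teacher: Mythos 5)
Your proof is correct and is essentially the paper's own argument: the paper likewise notes that, for $a>0$, the quadratic inequality (\ref{E13}) must admit at least one real solution $V$, which forces the quantity under the square root in (\ref{E7}) — exactly your discriminant factorization $(b-a)\bigl(b(1+\beta)^2-a(1+\alpha)^2\bigr)$ — to be non-negative, yielding the pinching bound. The only (shared) caveat is that passing from the non-negativity of this product to the stated inequality implicitly uses $a<b$; the degenerate case $a=b$ is not separately addressed in either argument.
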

Corollary~\ref{cor1} is first proved in~\cite{FL2}. Special cases when $n=1$ and $n=2$ have been previously proven by Chen-Lin \cite{ChLi} and Bartolucci \cite{Ba}, respectively. Here we have obtained an alternative argument.

\section{Concluding proofs}
In this section, we analyze the geometric models in various extremal situations. The idea is simply to trace cases for equalities in the proof of Theorem~\ref{addt1}. The equality case for isoperimetric inequality tells us level sets $\{u=t\}$ of the conformal factor in consideration are round circles for almost everywhere $t$. The equality case for the H\"{o}lder's inequality implies that these circles must be concentric. It then follows that the extremal models have to be rotationally symmetric and can allow at most two conic points. The corresponding quantities $A(t)$ and $B(t)$ uniquely determine the underlying geometry.

Even though the extremal models admit at most two conic points, following the arguments in ~\cite{FL1} and ~\cite{FL2}, we can construct conic metrics $g$ representing $D$ with proper curvature bound whose volumes approach to extremal volume bound, regardless of the number of conic points in $D$. This yields the answer for the minimal volume question. For the convergence part, we adopt the same idea in ~\cite{FL1} to study the isoperimetric deficit, from which the merging of conic points must occur.

\begin{proof}[Proof of Theorem ~\ref{T2}]
The proof hinges on analyzing the equality cases of Lemma~\ref{lemma}. When $V$ takes $V_{a,b}$, $V_{0,b}$, $V_{\rm min}$ and $V_{\rm max}$ respectively, we all have equality cases in Lemma~\ref{lemma}. It then follows
\[
\mathbb{A}(s)=\begin{cases}
bs, & 0\leq s<\delta\\
a(s-V)+2\pi(2+|D|), & \delta\leq s\leq V,
\end{cases}
\]
where $\delta=\frac{2\pi(2+|D|)-aV}{b-a}$.
After some calculations we find the corresponding conformal factors as follows:
\begin{itemize}
\item \begin{align} \label{Vab}
e^{2u_{a,b}}=\begin{cases}
\frac{1}{b}\frac{4(1+\alpha)^2 |z|^{2\alpha}}{(1+|z|^{2(1+\alpha)})^2} \quad |z|\leq r,\\
-\frac{1}{a}\frac{4(1+\beta)^2}{(1-\frac{1}{|z|^{2+2\beta}})^2|z|^{4+2\beta}} \quad |z|\geq r,
\end{cases}
\end{align}
where $r$ is uniquely determined by
\[
\int_{|z|\leq r}\frac{1}{b}\frac{4(1+\alpha)^2 |z|^{2\alpha}}{(1+|z|^{2(1+\alpha)})^2}=\frac{2\pi(2+|D|)-aV_{a,b}}{b-a}.
\]
\item \begin{align} \label{V0b}
e^{2u_{0,b}}=\begin{cases}
\frac{1}{b}\frac{4(1+\alpha)^2 |z|^{2\alpha}}{(1+|z|^{2(1+\alpha)})^2} \quad |z|\leq r,\\
\frac{1}{|z|^{4+2\beta}} \quad |z|\geq r,
\end{cases}
\end{align}
where $r$ is uniquely determined by
\[
\int_{|z|\leq r}\frac{1}{b}\frac{4(1+\alpha)^2 |z|^{2\alpha}}{(1+|z|^{2(1+\alpha)})^2}=\frac{2\pi(2+|D|)}{b}.
\]
\item \begin{align} \label{Vmin}
e^{2u_{\rm min}}=\begin{cases}
\frac{1}{b}\frac{4(1+\alpha)^2 |z|^{2\alpha}}{(1+|z|^{2(1+\alpha)})^2} \quad |z|\leq r,\\
\frac{1}{a}\frac{4(1+\beta)^2}{(1+\frac{1}{|z|^{2+2\beta}})^2|z|^{4+2\beta}} \quad |z|\geq r,
\end{cases}
\end{align}
where $r$ is uniquely determined by
\[
\int_{|z|\leq r}\frac{1}{b}\frac{4(1+\alpha)^2 |z|^{2\alpha}}{(1+|z|^{2(1+\alpha)})^2}=\frac{2\pi(2+|D|)-aV_{\rm min}}{b-a}.
\]
\item \begin{align} \label{Vmax}
e^{2u_{\rm max}}=\begin{cases}
\frac{1}{b}\frac{4(1+\alpha)^2 |z|^{2\alpha}}{(1+|z|^{2(1+\alpha)})^2} \quad |z|\leq r,\\
\frac{1}{a}\frac{4(1+\beta)^2}{(1+\frac{1}{|z|^{2+2\beta}})^2|z|^{4+2\beta}} \quad |z|\geq r,
\end{cases}
\end{align}
with $r$ determined uniquely by
\[
\int_{|z|\leq r}\frac{1}{b}\frac{4(1+\alpha)^2 |z|^{2\alpha}}{(1+|z|^{2(1+\alpha)})^2}=\frac{2\pi(2+|D|)-aV_{\rm max}}{b-a}.
\]
\end{itemize}
Note if $\alpha=\beta$, then $r=\infty$ in (\ref{Vab}), (\ref{V0b}), (\ref{Vmin}) and $r=0$ in (\ref{Vmax}).
\end{proof}

\begin{proof}[Proof of Theorem~\ref{T3}]
For the convergence, let
\[
D(t):=(\int_{\partial \Omega(t)} 1)^2-4\pi B(t)
\] be the isoperimetric deficit.
Taking this into account, (\ref{E10}) can be refined to
\begin{align} \label{E11}
\frac{d}{ds}[e^{2t(s)}\mathbb{B}(s)]\geq1+\alpha-\frac{\mathbb{A}(s)}{2\pi}+e^{2t(S)}\frac{D(t(s))}{2\pi}.
\end{align}
Then for a sequence of metrics $g_l$ with ${\rm Vol}(g_l)$ approaches to any of the extremal bounds $V_{a,b}$, $V_{0,b}$, $V_{\rm min}$ and $V_{\rm max}$, it is easy to see the corresponding $D_l(t)\to 0$. Thus one just follows the main argument in ~\cite{FL1} to conclude that after proper conformal gauge fixing, conformal factors $e^{2u_l}$ of $g_l$ converge to $e^{2u_{a,b}}$,$e^{2u_{0,b}}$,$e^{2u_{\rm min}}$ and $e^{2u_{\rm max}}$, respectively.  Moreover $p^{l}_2, \cdots p^{l}_n$ all merge to $0$.
\end{proof}

\begin{proof}[Proof of Theorem ~\ref{T4}] Given a conic metric $g$ on $(S^2,D)$, with $|K_g|\leq 1$. We would compare the volume lower bound given by Theorem~\ref{addt1}. Calculation shows that the minimum is the lower bound in Theorem~\ref{addt1} when taking $a=-1$ and $b=1$. By Lemma~\ref{lemma}, we infer
\[\mathbb{A}(s)=\begin{cases}
s, & 0\leq s<\delta\\
-(s-V)+\chi, & \delta\leq s\leq V,
\end{cases}
\] where $\delta=\frac{V+\chi}{2}$.
Then according to the computation above, the conformal factor is
\begin{align} \notag
e^{2u_{-1,1}}=\begin{cases}
\frac{4(1+\alpha)^2 |z|^{2\alpha}}{(1+|z|^{2(1+\alpha)})^2} \quad |z|\leq r,\\
\frac{4(1+\beta)^2}{(1-\frac{1}{|z|^{2+2\beta}})^2|z|^{4+2\beta}} \quad |z|\geq r,
\end{cases}
\end{align}
where $r$ is uniquely determined by
\[
\int_{|z|\leq r}\frac{4(1+\alpha)^2 |z|^{2\alpha}}{(1+|z|^{2(1+\alpha)})^2}=\frac{V_{\alpha, \beta}+2\pi(2+|D|)}{2}.
\]

Now we can follow the strategy of the proof of Theorem 3.1 in~\cite{FL2} to construct a sequence of approximated conformal factors $e^{2u_i}$ represents $D$, with the required curvature bound and ${\rm Vol}(g_i=e^{2u_i}g_0) \to V_{\alpha, \beta}$, which leads to ${\rm MinVol}(S^2, D)=V_{\alpha, \beta}$. It is important to note that if there are more than 3 conic points in $D$, all but one are placed in the region of positive curvature. Thus the construction in~\cite{FL2} work also in this setting.  We have thus finished the proof of Theorem~\ref{T4}.
\end{proof}

\begin{proof}[Proof of Corollary~\ref{C1}] The pinching estimate follows from Corollary~\ref{cor1}. If
\[
\frac{a}{b}=\frac{(1+\beta)^2}{(1+\alpha)^2},
\]
then there is only one $V\in\mathbb{R}$ satisfying the inequality
\[
abV^{2}-4\pi(a(1+\alpha)+b(1+\beta))V+\chi^{2}\leq 0.
\]
Hence $V=V_{\rm min}=V_{\rm max}$. It is also easy to see that $r=1$ in both (\ref{Vmin}) and (\ref{Vmax}). Geometrically, $\frac{a}{b}=\frac{(1+\beta)^2}{(1+\alpha)^2}$ implies that the equators of two footballs $S^2_{\beta, a}$ and $S^2_{\alpha, b}$ have equal length. The conic sphere in this case is isometric to the gluing two halves of above footballs along their equators.
\end{proof}


\begin{thebibliography}{99}
\bibitem[Ba]{Ba} D. Bartolucci, \emph{On the best pinching constant
of conformal metrics on $S^{2}$ with one and two conical singularities},
J. Geom. Anal. 23 (2013), no. 2, 855-877.
\bibitem [BP] {BP} C. Bavard and P. Pansu, \emph{Sur le volume minimal de $\mathbb{R}^2$}, Ann. Scient. \'{E}c. Norm. Sup. 19, (1986), 479-490.
\bibitem [B] {B} B. H. Bowditch, \emph{ The minimal volume of the plane}, J. Austral. Math. Soc. (Series A) 55 (1993) 23-40.
\bibitem[BZ] {BZ} J. Brothers and W. Ziemer,\emph{Minimal rearrangements of Sobolev functions}, J. Reine Angew. Math., 384 (1988), 153-179.
\bibitem[ChLi]{ChLi} C.C. Chen and C.S. Lin, \emph{A sharp sup+inf
inequality for a nonlinear elliptic equation in $R^{2}$}, Commun.
Anal. Geom. 6(1), (1998) 1-19.
\bibitem [CG1] {CG1} J. Cheeger and M. Gromov, \emph{Collapsing Riemannian manifolds while keeping their curvature bounded},
J. Diff. Geom. 23 (1986) 309-346.
\bibitem [CG2] {CG2} J. Cheeger and M. Gromov, \emph{Collapsing Riemannian manifolds while keeping their curvature bounded II},
J. Diff. Geom. 32 (1990) 269-298.
\bibitem [CL] {CL} W. Chen and C. Li, \emph{What kinds of singular surfaces can admit constant curvature?}, Duke Math. J., 78(1995) no.2, 437-451.
\bibitem [FL1] {FL1} H. Fang and M. Lai, \emph{On convergence to a football}, arXiv:1501.06881, to appear in Math. Ann.
\bibitem [FL2] {FL2} H. Fang and M. Lai, \emph{On curvature pinching of conic 2-spheres}, arXiv:1506.05901.
\bibitem [G] {G} M. Gromov, \emph{Volume and bouded cohomology}, Inst. Hautes \'{E}tudes Sci. Publ. Math. No. 56 (1982), 5-99 (1983).
\bibitem [LT] {LT} F. Luo and G. Tian, \emph{Lioville equation and spherical convex polytopes}, Proc. Amer. Math. Soc. 116(1992), no.4, 1119-1129.
\bibitem [MWX] {MWX} J. Mei, H. Wang and H. Xu, \emph{An elementary proof of ${\rm MinVol}(\mathbb{R}^n)=0$ for $n\geq3$}, An. Acad. Brasil. Ci\^{e}nc. 80 (2008), no. 4, 597-616.
\bibitem [Mc] {Mc} R. C. McOwen, \emph{Point singularites and conformal metrics on Riemann surfaces}, Proc. Amer. Math. Soc. 103, (1988), 222-224.
\bibitem [Tr] {Tr} M. Troyanov, \emph{Prescribing curvature on compact surfaces with conical singularities}, Trans. Amer. Math. Soc., 324(1991), no.2, 793-821.
\end{thebibliography}
\end{document}